\documentclass{article}
\usepackage{amsmath,amsfonts,amssymb,amsthm}
\usepackage[british]{babel}
\usepackage{hyperref}
\usepackage{dsfont}

\usepackage{xcolor}

\usepackage{xfrac}

\textwidth = 19.5cm
\textheight = 22.5cm
\oddsidemargin = -1.5cm
\evensidemargin = -1cm
\topmargin =-2cm

\newcommand{\bbR}{\mathbb{R}}

\newcommand{\eps}{\varepsilon}

\makeatletter
\newcommand{\vast}{\bBigg@{3}}
\newcommand{\Vast}{\bBigg@{5}}
\makeatother


\def\Tau{\mathcal{T}}

\allowdisplaybreaks

\newtheorem{Theorem}{Theorem}

\newtheorem{Proposition}{Proposition}
\newtheorem{Lemma}{Lemma}


\usepackage{scrextend}

\usepackage{lipsum}

\begin{document}

\date{\today}
\title{On the one dimensional cubic NLS in a critical space}
\title{On the one dimensional cubic NLS in a critical space}
\author{Marco Bravin \thanks{ BCAM - Basque Center for Applied Mathematics, Mazarredo 14, E48009 Bilbao, Basque Country - Spain } \\Luis Vega \footnotemark[1]}

\maketitle

\begin{abstract}
In this note we study the initial value problem in a critical space for the one dimensional Schr\"odinger equation with a cubic non-linearity  and under some smallness conditions. In particular the initial data is given by a sequence of Dirac deltas with different amplitudes but equispaced. This choice is motivated by a related geometrical problem; the one describing the flow of curves in three dimensions moving in the direction of the binormal with a velocity that is given by the curvature.  
\end{abstract}

\section{Introduction}

Since the work of Da Rios \cite{DaR}, it is known the connection between the evolution of a vortex filament in an incompressible inviscid fluid in three dimensions and the so called Binormal Curvature Flow (BF). The equation for this flow reads 
%
%
\begin{equation}
\label{VFE}
\partial_t \chi(t,s) = \partial_{s}\chi(t,s) \wedge \partial_{ss}^2 \chi(t,s)
\end{equation}  
where $ \chi: \mathbb{R} \times \mathbb{R} \longrightarrow \mathbb{R}^3 $ is a time dependent curve parametrized by the arclength $ s $. Note that the tangencial vector to the curve $ T(t,s) = \partial_{s} \chi (t,s) $ satisfies the so called Schr\"odinger map equation onto $ \mathbb{S}^2 $:
\begin{equation*}
\partial_t T = T \wedge \partial_{ss}^2 T.
\end{equation*} 
 Thanks to the Hasimoto transformation, see \cite{Has}, the above equation is associated with a focusing 1D cubic non-linear Schr\"odinger equation
\begin{equation*}
i\partial_t u + \Delta u + \frac{1}{2}(|u|^2-m(t)) u = 0,
\end{equation*}
for some $m(t)\in\mathbb{R}$. The freedom of choice of $m(t)$ comes form the gauge invariance of the underlying geometric problem, and plays a crucial role when the initial datum of \eqref{VFE} is singular. A specific kind of singularity that has recently received attention is the one given by the presence of corners. In particular, in \cite{scat:BV} and \cite{sta:BV}  the case of one corner and the stability of the solution is considered, and in \cite{Ban:Veg}  the more general situation of polygonal lines is studied. More concretely, in the latter work filaments made of, possibly infinity, segments of the same length $ 2\pi $ are analyzed\footnote{ The works \cite{J:S} and \cite{DH:V}, give numerical evidence of the interest of considering polygonal lines as initial datum for the vortex filament equation (VFE).}.
The corresponding initial datum for \eqref{VFE} translates to a, possibly infinite, sum of deltas of Dirac on $ 2\pi \mathbb{Z}$ as initial datum for the non-linear Schr\"odinger equation. The good choice of $m(t)$ in this situation turns out to be
\begin{equation}
\label{m(t)}
m(t)=\frac{M}{2\pi t},\qquad M = \sum_k |\alpha_{k}|^2.
\end{equation}
As a consequence one is led to study the Initial Value Problem (IVP)
\begin{align}
\label{CNLS:sys}
\partial_t u = \, & i\left(\Delta u + \frac{1}{2}\left(|u|^2-\frac{M}{2\pi t}\right) u \right)
\\
u(0,.) = \, & \sum_{k} \alpha_{k}\delta_{k}(.),\qquad  \delta_{k}(x)=\delta(x-k).
\end{align}
%

%

In \cite{Kita}, the IVP analogous to \eqref{CNLS:sys} but for subcritical nonlinearities, $ u|u|^{p-1} $, $ p < 3 $, is studied, and well-posedness is proved among all the solutions that are written as the sum
\begin{equation}
\label{ansatz}
u(t,x) =  \sum_{k} A_k(t) e^{it\Delta} \delta_{k}(x).
\end{equation}
Above $e^{it\Delta} $ stands for the usual free propagator of the linear Schr\"odinger equation. As done in  \cite{Ban:Veg}, one can use the identity $e^{it\Delta} \delta_{k}= \frac{e^{i\frac{(x-k)^2}{4t}}}{\sqrt {t}}$  to obtain that $u$ can be written  for $t>0$  in terms of a new function $v$ where  $u=\mathbb{T}(v)$. Here $\mathbb{T} $ is the so-called pseudo-conformal transformation that is given by
\begin{equation*}
\mathbb{T} (v)(t,x)=\frac{e^{i\frac{x^2}{4t}}}{\sqrt {t}}\,\overline v\left(\frac 1t,\frac x{t}\right).
\end{equation*}
As a consequence the corresponding non-linear potential becomes
$$|u|^{p-1} =\frac{1}{t^{\frac{p-1}{2}}} |v|^{p-1},$$
so that the factor $\frac{1}{t^{p-1}}$ is locally integrable around $t=0$ if and only if $p<3$. In this case $m(t)$ can be chosen identically zero while for $p=3$ a modification is needed, (cf. \cite{Ban:Veg}). In this particular situation the equation for $v$ turns out to be
\begin{equation}\label{vNLS}
iv_t+v_{xx}+\frac 1{2t}(|v|^2- M)v=0,
\end{equation} 
with $M$ as in \eqref{m(t)}.

Notice that in the definition of $\mathbb{T} $ there is an inversion of the time variable and therefore the IVP for $u$ becomes a scattering problem for $v$. Also observe that the solutions of  \eqref{vNLS} formally preserve the $L^2$ norm
\begin{equation}\label{L2}\int |v|^2\, dx.
\end{equation} 
Due to the fact that from \eqref{ansatz}
$$v(t,x)=\sum_k A_k(t)e^{ikx},$$
we immediately obtain that
\begin{equation}\label{l2}
\sum_k|A_k(t)|^2=\sum_k|\alpha_k|^2=M,
\end{equation} 
which justifies the choice done in \eqref{m(t)}.
Also
\begin{equation*}
E(v)(t):=\frac12\int|v_x(t)|^2\,dx-\frac{1}{4t}\int(|v|^2-|\alpha|^2)^2\,dx
\end{equation*}
satisfies 
$$\partial_tE(v)(t)=\frac{1}{4t^2}\int(|v|^2-|\alpha|^2)^2\,dx.$$

Once the ansatz \eqref{ansatz} is fixed, the IVP  \eqref{CNLS:sys} is reduced to solve the following  infinite dynamical system in  the variables $ A_k(t) $
 \begin{equation}
 \label{Ak}
i\partial_{t} A_{k} = \frac{1}{8\pi t} \sum_{j_1,j_2,j_3 \in NR_k} e^{-i\frac{|k|^2-|j_1|^2+|j_2|^2-|j_3|^2}{4t}}A_{j_1}\bar{A}_{j_2}A_{j_3} - \frac{1}{8 \pi t}|A_{k}|^2A_{k},
\end{equation}
where we have used the notation
$$ NR_k = \{ (j_1,j_2,j_3) \in \mathbb{Z}^3\,  \text{such that}\,  k = j_1-j_2 -j_3\,\, \text{and}\,\, k^2-j_1^2 +j_2^2-j_3^2 \neq 0 \}.$$
The lack of integrability at the origin of  the $1/t$ factor in the above expression requires the use of the phase renormalization 
\begin{equation}
\label{tilde}
A
_{k}(t) = e^{i\frac{|\alpha_{k}|^2}{8 \pi}\log t} \tilde{A}_{k}(t).
\end{equation} 
Then for $ t> 0$, the functions $ \tilde{A}_{k} $ satisfy the system
\begin{equation}
\label{last:ban:veg}
i\partial_{t} \tilde{A_{k}} = \frac{1}{8\pi t} \sum_{j_1,j_2,j_3 \in NR_k} e^{-i\frac{|k|^2-|j_1|^2+|j_2|^2-|j_3|^2}{4t}}e^{-i\frac{|\alpha_k|^2-|\alpha_{j_1}|^2+|\alpha_{j_2}|^2-|\alpha_{j_3}|^2}{8\pi}\log t}\tilde{A}_{j_1}\bar{\tilde{A}}_{j_2}\tilde{A}_{j_3} - \frac{1}{8 \pi t}\left(|\tilde{A}_{k}|^2 - |\alpha_k|^2\right)\tilde{A}_{k},
\end{equation} 
with the initial condition
\begin{equation}
\label{iv}
\tilde{A}_{k}(0) = \alpha_k.
\end{equation}
If we define the spaces $ l^{2,s}$ as those sequences $ \{\alpha_k \} $ such that
$$\|(\alpha_k)\|_{l^{p,s}} = \left( \sum_k |(1+k^2)^s \alpha_k|^p\right)^{1/p}<+\infty$$
with $l^{p,0}=l^{p}$, then it is proved in  \cite{Ban:Veg} that the system \eqref{last:ban:veg}-\eqref{iv} is local-in-time well-posed for $ \{\alpha_k \} \in l^{2,s} $ with $ s > 1/2 $ and for $ \{\alpha_k \} \in l^{1} $.  Observe that due to \eqref{tilde} if the IVP \eqref{last:ban:veg}-\eqref{iv} is well posed then the ones associated to \eqref{Ak} and to \eqref{CNLS:sys} is ill posed, cf. Theorem 1.4 in  \cite{Ban:Veg}.

In this work we would like to consider initial data  in $ l^p $ for $p \in (1,\infty) $. We will perform a fixed point argument based on considering the regularity of the coefficients $A_k(t)$. Due to the fact that we are working directly with solutions to the system \eqref{last:ban:veg} it will be enough for our purposes to use the classical Sobolev spaces instead of the usual ones in this setting introduced by J. Bourgain in \cite{Bou:1}. 

The results of this paper regarding the IVP \eqref{last:ban:veg}-\eqref{iv} can be resumed as follows. \footnote{In Theorem  \ref{Theo:p=2} the result is stated for $ B_k(t/4) = \tilde{A}(1/t) $.}

\begin{itemize}

\item For $  p \in (1,+\infty) $, 

\begin{itemize}
	
	\item (i) Large time well-posedness for small initial datum. More precisely for any $ T > 0 $, there exists $ \eps(T) > 0 $ such that if the $ l^p $ norm of the initial datum $ \{\alpha_{k}\} $ is smaller then $ \eps(T) $, then there exists a unique solution of \eqref{last:ban:veg} in $ [0, T] $ in an appropriate sense. 
	
\item (ii) Short-in-time well-posedness for small enough initial datum in $ l^{\infty}$. More precisely if the $ l^{\infty} $ norm of $\{ \alpha_k \}$ is small enough then there exits a small time $ T(\|\alpha\|_{\l^{\infty}},\|\alpha\|_{\l^p}) $ such that a unique solution of \eqref{last:ban:veg} exists in $ [0,T] $ in an appropriate sense.  
	
\end{itemize} 	

\item    For $ p = 2 $, global in time well-posedness with small assumption in $ l^{\infty} $ for the initial datum. As it can be expected this result follows from (ii) and the $ l^2 $ conservation law.

\end{itemize}

The first smallness condition is rather natural due to the criticality of the problem and that the method of proof is perturbative. Also the fact that the potential $1/t$ is not bounded for $t>0$ makes necessary to consider $T < + \infty $. The second smallness condition is less natural. It comes from the linear terms in \eqref{equ:til:R} below. This difficulty already appears in the case of the perturbation of one delta function and was addressed in \cite{scat:BV} and \cite{sta:BV}. In these papers the linear terms are not considered as perturbative. Nevertheless, there is a price to be paid for it, namely the possible growth of the zero Fourier mode of the perturbation (cf. section 6 in \cite{sta:BV} and Appendix B in \cite{scat:BV}). In this paper we consider another type of perturbation,  a periodic one, and this growth can not be possible due to the conservation law \eqref{L2}-\eqref{l2}. The use of this conservation law in the linearized system, cf. with (55) and (56) in  \cite{sta:BV} in the case of one delta, will require a delicate analysis that we postpone for the future.

Finally, although it is not clear that the above results extend to initial data in $ l^{\infty} $ we are able to exhibit some particular solutions that include the case of regular polygons for \eqref{VFE}, see \cite{ J:S} and \cite{DH:V}. More precisely, if the initial condition is a constant sequence, in other words $ \alpha_k = \alpha $ for any $ k \in \mathbb{Z} $, then there exists an explicit solution to the problem. The result is given in Proposition \ref{prop:exa} that is proved in collaboration with V. Banica. The uniqueness of this solution is quite likely a very challenging problem.
 
\subsection{Connection with previous results} 

The well-posedness of 1D cubic NLS on the line and on the circle was firstly tackled respectively in \cite{T} and \cite{Bou:1} for $ L^2 $ initial data. A natural scaling for the equation is $ u_{\lambda}(t,x) = \lambda u(\lambda^2 t, \lambda x) $. Therefore the $L^2(\Bbb R)$ setting is far from being scaling invariant, and something similar happens in the periodic case if the independence of the time of existence  on the size of the period is included in the analysis. The first steps to go beyond the $L^2(\Bbb R)$ theory were given in \cite{VV} in a functional setting constructed adhoc using the well known Strichartz estimates. This approach  was extended later on in \cite{G} where well-posedness is studied in the Fourier-Lebesque spaces that we denote by $ \mathcal{F}L^p $. These are spaces of distributions whose Fourier transforms are bounded in $ L^p(\Bbb R) $. Therefore, they are invariant respect to translations  in the phase space. Moreover $ \mathcal{F}L^{\infty} $ is also scaling invariant and therefore critical.   Local in time well-posedness, also including periodic boundary conditions using the Fourier coefficients instead of the Fourier transform,  was shown in \cite{GH} in the $ \mathcal{F}L^p $ space for $ 2 < p < + \infty $.

At the same time plenty of effort has been done to understand well-posedness in the setting of Sobolev spaces. In this case the homogeneous space $ \dot{H}^{-1/2} $ is critical. Many papers have been devoted to clarify what happens between $ L^2 $ and $ \dot{H}^{-1/2} $. In particular, it has been shown ill-posedness, in the sense that a data to solution map which is uniformly continuous does not exist in $ H^{s} $ with $ s < 0 $. Even more, an inflation phenomena concerning the growth of the Sobolev norms has been proved, see \cite{KPV01}-\cite{CCT}-\cite{CK}-\cite{K}-\cite{Oh}. Finally in \cite{HKV} it has been shown well-posedness in $ H^{s} $ for $ s > -1/2 $.   This result  provides a weaker notion of continuity for the data to solution map. Moreover, the connection with this result and the Fourier-Lebesgue spaces has been recently done in \cite{Oh-Wa}.

In this paper, we are considering sums of Dirac's deltas as initial datum. Note that a Dirac delta is critical for the $ \dot{H}^{-1/2} $ and belongs to $ \mathcal{F}L^{\infty} $ space. This means that we are considering a critical regime. 

As said before, in \cite{Ban:Veg} the weighted spaces $l^{2,s} $ for $ s > 1/2 $ are used. The main reason is that they are very convenient to obtain solutions of \eqref{VFE} from those of \eqref{CNLS:sys}. Up to what extent solutions of \eqref{VFE} can be constructed using the ones obtained in this paper seems to us a very challenging question that we propose to address in the future.

\paragraph{Acknowledgements.} We want to thank V. Banica for very fruitful discussions and more concretely for her contribution in Proposition \ref{prop:exa}. 
Marco Bravin  is supported by ERC-2014-ADG project HADE Id. 669689 (European Research Council).
Luis Vega is supported by ERC-2014-ADG project HADE Id. 669689 (European Research Council), MINECO grant BERC 2018-2021, PGC2018-094522-B-I00 and  SEV-2017-0718 (Spain).

\section{Well-posedness results for initial datum in $ l^p $ with $ p < + \infty$}

To present the result we rewrite the equations in a fixed point framework. We start by introducing some notation. 

Let us recall that $ NR_k = \{ (j_1,j_2,j_3) \in \mathbb{Z}^3 $ such that $ k = j_1-j_2 -j_3 $ and $ k^2-j_1^2 +j_2^2-j_3^2 \neq 0 \}$. Moreover, for any triple in $ NR_{k} $, it holds $ m := k^2-j_1^2 +j_2^2-j_3^2 = 2(k-j_1)(j_1-j_2) $. In particular the map $ (j_1,j_2,j_3) \to (k^2-j_1^2 +j_2^2-j_3^2, k-j_1) := (m,z) $ is a bijection between $ NR_{k} $ and $ \{ (m,z) \in \mathbb{Z}^2 $ such that $ m \neq 0 $ and $ 2z $ divide $ m \}$. We finally denote by 
\begin{equation}
\label{def:r(m)}
r(m) = \{ z \in \mathbb{Z} \text{ such that } 2z \text{ divide } m  \}.
\end{equation}

The system \eqref{last:ban:veg} can be rewritten as 
\begin{equation*}
i\partial_{t} \tilde{A_{k}} = \frac{1}{8\pi t} \sum_{m \neq 0}\sum_{r(m)} e^{-i\frac{m}{4t}}e^{-i\frac{\Lambda_m}{8\pi}\log t}\tilde{A}_{j_1}\bar{\tilde{A}}_{j_2}\tilde{A}_{j_3} - \frac{1}{8 \pi t}\left(|\tilde{A}_{k}|^2 - |\alpha_k|^2\right)\tilde{A}_{k},
\end{equation*} 
where we denote $ \Lambda_m = |\alpha_k|^2-|\alpha_{j_1}|^2+|\alpha_{j_2}|^2-|\alpha_{j_3}|^2 $.
Let us introduce the new variable
$$ B_k(t/4) = \tilde{A}_k(1/t).$$
The equations for $ B_k $ are 
\begin{equation}
\label{B:equ}
i\partial_{t} B_{k} = -\frac{1}{8\pi t} \sum_{m \neq 0}\sum_{r(m)} e^{-imt}e^{i\frac{\Lambda_m}{8\pi}\log 4t}B_{j_1}\bar{B}_{j_2}B_{j_3} + \frac{1}{8 \pi t}\left(|B_{k}|^2 - |\alpha_k|^2\right)B_{k} \quad  \text{ and } \quad \lim_{t \to +\infty} B_k = \alpha_k.
\end{equation}  
Writing $ B_k = R_k + \alpha_k $ and integrating the above equations, we get 
\begin{equation}
\label{equ:til:R}
R_{k}(t) = \frac{i}{8 \pi}\int_{t}^{+\infty} \frac{1}{\tau} \sum_{m \neq 0}\sum_{r(m)} e^{-i m\tau}e^{i\frac{\Lambda_m}{8\pi}\log 4 \tau}(R_{j_1}+\alpha_{j_1})\overline{(R_{j_2}+\alpha_{j_2})}(R_{j_3}+\alpha_{j_3}) - \frac{1}{\tau}\left(|R_{k}+\alpha_k|^2 - |\alpha_k|^2\right)(R_{k} +\alpha_k) \, d\tau.
\end{equation}

We show well-posedness of solutions for the system \eqref{equ:til:R} via a fixed point argument. More precisely we introduce a map $ \Tau $ in  \eqref{equ:Tau:R} and we show that it is a contraction on some bounded subset of the Banach space $ X^{s}_p $. To introduce this space let us start by defining the Banach spaces $ \tilde{H}^{s,p}$ and $ \mathcal{H}^{s}_{p} $. 
%

At an informal level, for a bounded interval $ I \subset \bbR $, the space $ \mathcal{H}^{s}_{p}(I) $ corresponds to a Fourier-Lebesque space for non periodic functions. We say that $ f $ is an element of $ \mathcal{H}^{s}_{p}(I) $ if it admits an extension $ \tilde{f} $ defined on a bigger interval $ I^e \supset I $ such that $ \tilde{f} $ in a Fourier-Lebesque-type space $ \tilde{H}^{s,p}(I^e) $. Then, the $ \mathcal{H}^{s}_{p} $ norm is defined as the infimum of the $ \tilde{H}^{s,p} $ norm over all the possible extensions.

Let us recall that for $ p \in[1,+\infty) $, $ s \in \bbR $ and $ \Pi_4 = \sfrac{\bbR}{4\pi \mathbb{Z}}  $ the torus of size $ 4\pi $, the Fourier-Lebesque space $ \mathcal{FL}^{s,p}(\Pi_4)$ is defined by the norm 
\begin{equation*}
\| \mathfrak{f} \|_{\mathcal{LF}^{s,p}(\Pi_4)}^p = \sum_{2k \in \mathbb{Z}} \left| \langle k \rangle^{s} \hat{\mathfrak{f}}_{k}\right|^p,
\end{equation*}
where $ \hat{\mathfrak{f}}_k $ denotes the $ k $-th Fourier coefficient of $ \mathfrak{f} $.

For $ \nu \in \mathbb{N} $, let $ I_{\nu} = [\pi \nu,  \pi (\nu+2)] $ and $ I^e_{\nu} = [\pi(\nu-1), \pi (\nu+3)]$. For $ s \in \mathbb{R}^+ $ and $ p\in [1, +\infty)$, we denote by $ \tilde{H}^{s,p}(I^e_{\nu}) $ the Banach space of functions $ \tilde{f}:  I^e_{\nu}  \longrightarrow \mathbb{C} $ with norm given by
\begin{equation*}
\| \tilde{f} \|_{\tilde{H}^{s,p}(I_{\nu}^e)}^p = \sum_{2k \in \mathbb{Z}} \left| \langle k \rangle^{s} \hat{\tilde{f}}_{k}\right|^p,
\end{equation*}
where $ \hat{f}_k $ denotes the $ k $-th Fourier coefficient of $ \tilde{f}$ as a $ 4\pi $-periodic function. Notice that the $ \tilde{H}^{s,p} $ space is isomorphic to the Fourier-Lebesque space $ \mathcal{FL}^{s,p}(\Pi_4) $ through the map that sends $ \tilde{f} $ to its $ 4 \pi $ periodic extension. For $ s = 0 $, we use the notation $ \tilde{L}^p(I_{\nu}^e) $ instead of  $ \tilde{H}^{0,p}(I_{\nu}^e) $.

We denote by $ \mathcal{H}^{s}_p(I_{\nu}) $ the Banach space of functions $ f: I_{\nu} \to \mathbb{C} $ with norm given by
\begin{equation*}
\|f\|_{\mathcal{H}^{s}_{p}(I_{\nu})}^p = \inf_{\substack{\tilde{f}\in \tilde{H}^{s,p}(I^{e}_{\nu}),  \\ \tilde{f}|_{I_{\nu}}  = f}}  \| \tilde{f} \|_{\tilde{H}^{s,p}(I_{\nu}^e)}^p = \inf_{\substack{\tilde{f}\in \tilde{H}^{s,p}(I^{e}_{\nu}),  \\ \tilde{f}|_{I_{\nu}}  = f}} 
\sum_{2k \in \mathbb{Z}} \left| \langle k \rangle^{s} \hat{\tilde{f}}_{k}\right|^p.
\end{equation*}
Let us notice that for $ s > 1/p $ the space $ \tilde{H}^{s,p}(I_{\nu}^e) \subset C^0(\Pi_4) $. This implies that for elements $ \tilde{f} $ of $  \tilde{H}^{s,p}(I_{\nu}^e) $ with $ s > 1/p $, it holds $ \tilde{f}(\pi(\nu-1)) =\tilde{f}(\pi(\nu+3)) $. At the contrary a function in $ \mathcal{H}^s_p(I_{\nu}) $ for $ s > 1/p $ is continuous in $I_{\nu}$, it admits an extension in $ C^0(\Pi_4)$  but it does not need to be $ 2\pi$-periodic in the sense that in general $ f(\pi \nu ) \neq f(\pi(\nu +2)) $.

As before we denote by $ \hat{L}^p(I_{\nu}) $ the space $ \mathcal{H}^0_p $. Moreover   the homogeneous seminorm $ \dot{\mathcal{H}}^{s}_{p} $ is given by  
\begin{equation*}
\|f\|_{\dot{\mathcal{H}}^{s}_{p}(I_{\nu})}^p = \inf_{\substack{\tilde{f}\in \tilde{H}^{s,p}(I^{e}_{\nu}),  \\ \tilde{f}|_{I_{\nu}}  = f}} 
\sum_{2k \in \mathbb{Z}} \left|  |k|^{s} \hat{\tilde{f}}_{k}\right|^p.
\end{equation*}

For  sequences $ \{ R_{k} \}_{k \in \mathbb{Z}} $ of functions $ R_k : (0, +\infty) \to \mathbb{C} $, we introduce the norm 
\begin{equation*}
\left\| \{R_k\}_{k \in \mathbb{Z}} \right\|_{X^{s}_p}^p = \sup_{\nu \geq 0} \|(\nu+1) \{R_k\}_{k \in \mathbb{Z}} \|_{l^p(\mathbb{Z};\mathcal{H}^{s}_{p}(I_{\nu}))}^p = \sup_{\nu \geq 0} \sum_{k \in \mathbb{Z}} \|(\nu+1) R_k \|_{\mathcal{H}^{s}_{p}(I_{\nu})}^p 
\end{equation*}
and $ X^{s}_p $ the space of sequences $\{R_k\}_{k\in \mathbb{Z}} $ for which the norm $ \|\{R_k\}_{k\in \mathbb{Z}}\|_{X^{s}_p}$ is bounded. In the following we use $ R $ to denote a sequence $ \{R_k\}_{k \in \mathbb{Z}} $ to simplify the notation and we use subindexes to denote the elements of a sequence.
Finally for any $ T > 0 $, we define the space  $ X^{s}_p(T) $ the space of sequences $ R $ of functions $ R_k : (T; +\infty) \to \mathbb{C} $ for which the norm 
$$ \left\| R \right\|_{X^{s}_{p}(T)} = \inf_{\substack{\tilde{R}\in X^{s}_p \\
		\tilde{R}|_{(T,\infty)}= R}} \|\tilde{R}\|_{X^{s}_p}$$
is bounded.
Let us conclude this section with the main result.

\begin{Theorem}
\label{Theo:p=2}
	
Let $ \{ \alpha_k \}_{k\in \mathbb{Z}} \in l^p $ an initial data for $ p \in (1,\infty)$.

\begin{itemize}
	
\item (Large in time well-posedness) For any $ T > 0 $, there exists $ \delta $ such that if $ \| \alpha \|_{l^p} < \delta $, then there exists a unique solution  $ R $ to \eqref{equ:til:R} in $ X^{ s}_{p}(T) $ for some $ s $ with $ \max \{ 1/p, 1- 1/p \}  < s < 1 $.

\item (Short in time well-posedness) There exists $ \delta > 0 $ such that if $ \| \alpha \|_{l^{\infty}} < \delta $, then for a big enough time $  T  \gg 1 $ there exists a unique solution  $ R $ to \eqref{equ:til:R} in $ X^{ s}_{p}(T) $  for some $ s $ with $ \max \{ 1/p, 1- 1/p \} < s < 1 $.
	
\end{itemize}	
Moreover in the special case $ p = 2 $, if $ \| \alpha \|_{l^{\infty}} < \delta $ for $ \delta $ small enough, then there exists a unique solution to \eqref{equ:til:R} in $ (0, +\infty) $, which belongs to $ \bigcap_{T > 0 } X^{ s}_{p}(T) $ for some $  1/2 < s < 1 $.
	
\end{Theorem}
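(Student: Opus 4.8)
The plan is to promote the short-in-time part of Theorem~\ref{Theo:p=2} to a genuinely global statement, the engine being twofold: the only non-integrable feature of \eqref{equ:til:R} near $\tau=0$ is the weight $1/\tau$, whose primitive $\int_{\lambda^{-1}t}^{t}\mathrm d\tau/\tau=\log\lambda$ is invariant under the rescaling $\tau\mapsto\lambda\tau$, and the $l^2$ norm transported by \eqref{l2} is conserved along the flow. First I would invoke the short-in-time part: since $\|\alpha\|_{l^\infty}<\delta$, there exist $T_0\gg1$ and a unique $R\in X^{s}_2(T_0)$ solving \eqref{equ:til:R} on $(T_0,+\infty)$ for some $1/2<s<1$. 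Writing $B=R+\alpha$ gives a solution of \eqref{B:equ} with $B(t)\to\alpha$ as $t\to+\infty$, and since $s>1/2=1/p$ the embedding $\mathcal H^{s}_2(I_\nu)\subset C^0$ makes each $B_k(\cdot)$ continuous, so that $B(t)\in l^2$ is defined pointwise.

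Next I would establish, at the level of this solution, that $\sum_k|B_k(t)|^2=\sum_k|\alpha_k|^2$ is independent of $t$. Formally this is \eqref{l2} carried through \eqref{tilde} and $B_k(t/4)=\tilde A_k(1/t)$, both of which preserve moduli. To make it rigorous I would differentiate $\sum_k|B_k|^2$ using \eqref{B:equ} and observe two cancellations: the diagonal term contributes $2\,\Re\!\big(\bar B_k\cdot(-i)\tfrac1{8\pi t}(|B_k|^2-|\alpha_k|^2)B_k\big)=0$, and the cubic quadruple sum is real because the involution exchanging the pair $(k,j_2)$ with $(j_1,j_3)$ preserves the resonance set while sending $m\mapsto-m$ and $\Lambda_m\mapsto-\Lambda_m$, hence conjugating $e^{-im\tau}e^{i\Lambda_m\log(4\tau)/(8\pi)}$ and mapping each summand to its complex conjugate, exactly as in the usual $L^2$ conservation for cubic NLS. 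The continuity furnished by $s>1/2$ is what licenses interchanging $\partial_t$ with $\sum_k$.

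I would then run a continuation argument. Conservation yields $\|B(t)\|_{l^\infty}\le\|B(t)\|_{l^2}=\|\alpha\|_{l^2}$ for every $t$ in the interval of existence, so the quantity that governs the local existence window never grows; for $\delta$ small enough that this conserved $l^2$ size stays below the local threshold, the hypotheses for a restart hold at every time. Let $(T_*,+\infty)$ be the maximal interval of existence and suppose $T_*>0$. Then $B(T_*^{+})$ exists and is small in $l^\infty$; taking it as data at the finite time $T_*$ and solving \eqref{equ:til:R} on $(T_1,T_*)$ with $T_1<T_*$ against the known tail reduces to a contraction on a finite interval, whose admissible length in the variable $\log\tau$ is bounded below by a fixed constant $\log\lambda$ (depending only on the threshold, through the scale-invariance of $\int\mathrm d\tau/\tau$). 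This extends the solution strictly below $T_*$, contradicting maximality, so $T_*=0$. The pieces agree on overlaps by local uniqueness, and the glued object lies in $X^{s}_2(T)$ for every $T>0$, i.e.\ in $\bigcap_{T>0}X^{s}_2(T)$; promoting the per-scale uniqueness gives global uniqueness.

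The hard part, I expect, is not the bookkeeping of this iteration but the \emph{uniformity} that makes it close: one must verify that the contraction constants and, above all, the non-stationary-phase gains extracted from the oscillation $e^{-im\tau}$ in \eqref{equ:til:R} do not deteriorate as $T_n\to0$ nor as the range of active scales $\nu$ increases, so that a single $\delta$ serves simultaneously at all scales, and that the slowly varying logarithmic phase $e^{i\Lambda_m\log(4\tau)/(8\pi)}$ is controlled uniformly in $\tau$. A secondary but genuine point is reconciling the stated $l^\infty$-smallness with the fact that the quantity actually propagated by conservation is the $l^2$ norm; this is precisely the delicate use of the conservation law in the linearised system that the introduction flags as postponed, and it is where I would expect the real effort to concentrate.
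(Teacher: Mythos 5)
There is a genuine gap, and it is structural: your proposal does not prove the theorem, it assumes most of it. The first two bullets (large-in-time and short-in-time well-posedness) \emph{are} the statement to be established, and your argument opens by invoking the short-in-time part as a known input (``First I would invoke the short-in-time part\dots''). The paper's proof consists almost entirely of establishing exactly those two bullets, by showing that the map $\Tau_{\eta_N}$ of \eqref{equ:Tau:R} is a contraction on suitable balls of $X^{s}_p$; the whole difficulty is concentrated in the multilinear estimates of Lemmas \ref{Lp:est} and \ref{Beta:est} --- the integration by parts in $\tau$ exploiting $e^{-im\tau}$ with $m\neq 0$ on the non-resonant set, the divisor bound on $r(m)$, the algebra property of $\tilde{H}^{s,p}$ for $s>1-1/p$ (inequality \eqref{ine:inf}), and the decay $(\nu+1)^{-1}$ per block $I_\nu$ that produces the factors $1/(N+1)$ making $\Tau_{\eta_N}$ contractive for $N$ large. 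None of this is carried out or replaced by an alternative argument in your proposal, so the two local statements are simply unproved.

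For the one part you do address, the $p=2$ globalization, the continuation loop does not close as written. Conservation \eqref{l2} propagates $\|B(t)\|_{l^2}=\|\alpha\|_{l^2}$, while the hypothesis is only $\|\alpha\|_{l^{\infty}}<\delta$; hence the quantity you propose to keep ``below the local threshold'' at restart times is $\|\alpha\|_{l^2}$, which need not be small, so the smallness hypothesis of the short-in-time result is not reproduced at $t=T_*$. Nor can you fall back on a routine Picard iteration on short logarithmic intervals for large $l^2$ data: the cubic convolution $\sum_{j_1-j_2-j_3=k}A_{j_1}\bar A_{j_2}A_{j_3}$ is not bounded from $l^2\times l^2\times l^2$ to $l^2$, so the nonlinearity in \eqref{equ:til:R} is not locally Lipschitz on $l^2$ and any local theory must again pass through the non-resonant oscillation --- precisely the estimates you have deferred. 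You acknowledge this yourself in the final paragraph (``where I would expect the real effort to concentrate''), which amounts to conceding that the argument is incomplete at its only load-bearing step. Your scale-invariance observation $\int_{t/\lambda}^{t}d\tau/\tau=\log\lambda$ and the symmetry argument for the reality of the quadruple sum (hence $\partial_t\sum_k|B_k|^2=0$) are correct and are indeed the right ingredients for the $p=2$ statement, but by themselves they do not constitute a proof of any of the three assertions.
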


We postpone the proof to Section \ref{Sec:proof}.

\subsection{Example of explicit solutions for special $ l^{\infty} $ initial datum }

Although we are not able to extend the previous results to the case $ p= \infty $, there exist non-trivial solutions to the system \eqref{last:ban:veg} with initial datum in $ l^{\infty}\setminus \bigcup_{p\geq 1} l^p $. In particular we are able to exhibit an explicit solution in the case where the initial datum is a constant sequence, in other words the $ \alpha_k = \alpha \neq 0 $ for any $ k \in \mathbb{Z} $.

This observation has been made in collaboration with V. Banica who has kindly allowed us to include it in this article.

For consistency let us present the results in the $ B $ variables.

\begin{Proposition}
\label{prop:exa}
Given 	$ \alpha \in \mathbb{C} $, then there exists an explicit solution to \eqref{B:equ} with initial datum $ \alpha_k = \alpha $ for any $ k \in \mathbb{Z} $. Namely the solution is
\begin{equation*}
B_k(t) = B(t) =  \alpha e^{-i|\alpha|^2\int_t^{+\infty} \sum_{m \neq 0}\sum_{r(m)} \frac{e^{-im \tau}}{8\pi \tau} \, d\tau}, 
\end{equation*}
where $ r(m) $ is defined in \eqref{def:r(m)}.	
\end{Proposition}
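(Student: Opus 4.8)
\section{Proof proposal for Proposition \ref{prop:exa}}

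The plan is to look for a solution of \eqref{B:equ} that is independent of the index $ k $, say $ B_k(t) = B(t) $ for all $ k \in \mathbb{Z} $, and then simply verify that the proposed formula solves the resulting scalar problem. The first step is to substitute this ansatz, together with $ \alpha_k = \alpha $ for every $ k $, into the right-hand side of \eqref{B:equ} and record the two simplifications that occur. On the one hand, since all the amplitudes coincide we get $ \Lambda_m = |\alpha_k|^2-|\alpha_{j_1}|^2+|\alpha_{j_2}|^2-|\alpha_{j_3}|^2 = 0 $ for every admissible triple, so the logarithmic phase $ e^{i\frac{\Lambda_m}{8\pi}\log 4t} $ is identically equal to $ 1 $. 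On the other hand, the cubic product $ B_{j_1}\bar{B}_{j_2}B_{j_3} = |B|^2 B $ no longer depends on $ (j_1,j_2,j_3) $, hence it factors out of the double sum $ \sum_{m\neq 0}\sum_{r(m)} $ (note that the remaining summand $ e^{-imt} $ does not depend on $ z \in r(m) $ either). The equation \eqref{B:equ} therefore collapses to the scalar ODE
\[
i\partial_{t} B = -\frac{|B|^2 B}{8\pi t} \sum_{m \neq 0}\sum_{r(m)} e^{-imt} + \frac{1}{8 \pi t}\left(|B|^2 - |\alpha|^2\right)B.
\]

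The next step is to observe that the proposed solution has constant modulus $ |B(t)| = |\alpha| $, which makes both right-hand terms much simpler. Indeed, pairing $ m $ with $ -m $ and using that $ r(m) = r(-m) $ (because $ 2z \mid m \iff 2z \mid -m $), one sees that $ \sum_{m\neq 0}\sum_{r(m)} e^{-im\tau} = 2\sum_{m>0}\#r(m)\cos(m\tau) $ is real-valued, so the exponent in the claimed formula is purely imaginary and $ |B(t)| = |\alpha| $ for all $ t $. Consequently the resonant-correction term $ \frac{1}{8\pi t}(|B|^2-|\alpha|^2)B $ vanishes identically, and the coefficient $ |B|^2 $ in the remaining term equals $ |\alpha|^2 $. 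What is left is the linear equation $ \partial_t B = i|\alpha|^2 g(t) B $ with real coefficient $ g(t) = \sum_{m\neq 0}\sum_{r(m)}\frac{e^{-imt}}{8\pi t} $. Integrating from $ t $ to $ +\infty $ and imposing the scattering condition $ \lim_{t\to+\infty} B(t) = \alpha $ yields precisely
\[
B(t) = \alpha\, e^{-i|\alpha|^2\int_t^{+\infty} \sum_{m \neq 0}\sum_{r(m)} \frac{e^{-im\tau}}{8\pi \tau}\, d\tau},
\]
and the boundary condition itself is immediate since the integral tends to $ 0 $ as $ t \to +\infty $.

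The \emph{main obstacle} is not algebraic but analytic: one must make rigorous sense of the oscillatory phase. The series defining $ g $ does not converge absolutely, since $ \#r(m) $ is (up to signs) a divisor-counting function and $ \sum_m \#r(m)\, e^{-im\tau} $ diverges. My plan to handle this is to work with the integrated quantity $ \int_t^{+\infty} \frac{e^{-im\tau}}{\tau}\, d\tau $, which by a single integration by parts is of size $ O\!\left(1/(|m|t)\right) $ and carries an extra oscillation in $ m $; combining this with the divisor bound $ \#r(m) = O(m^\eps) $ one obtains a conditionally (Abel/Dirichlet) convergent, real phase. The delicate points are then the justification of this conditional convergence and of differentiating the phase under the integral sign so as to recover the scalar ODE above; once these are in place, the rest of the argument is the elementary verification described in the two previous paragraphs.
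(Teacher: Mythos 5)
Your algebraic derivation coincides with the paper's: the same constant ansatz $B_k=B$, the observation $\Lambda_m=0$, the factorisation of $|B|^2B$ out of the double sum, the realness of $\sum_{m\neq 0}r_m e^{-imt}$ via $r_m=r_{-m}$ (which the paper uses to deduce $\partial_t|B|^2=0$ from the equation, while you read off $|B|=|\alpha|$ from the formula --- both are fine), and the explicit integration of the resulting linear ODE with the scattering condition at $t=+\infty$. The only substantive divergence is in how the oscillatory phase is made rigorous, and there your plan has a soft spot. A single integration by parts gives $\int_t^\infty \frac{e^{-im\tau}}{\tau}d\tau = \frac{e^{-imt}}{imt}+O\!\left(\frac{1}{|m|t}\right)$, so after multiplying by $r_m=O(|m|^{\eps})$ the general term is only $O(|m|^{\eps-1})$: this is not absolutely summable, and invoking Abel/Dirichlet summation would require nontrivial bounds on the divisor exponential sums $\sum_{m\le M}r_m e^{-imt}$, which you do not supply and which are genuinely delicate pointwise. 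The paper proceeds differently: it integrates by parts \emph{twice}, so that the remainder $\sum_m r_m f(t,m)$ with $|f(t,m)|\le (m t)^{-2}$ converges absolutely, and it does \emph{not} claim pointwise convergence of the surviving boundary term $\frac{1}{t}\sum_{m\neq 0}\frac{e^{-imt}}{im}r_m$; instead it interprets the corresponding factor $\tilde B$ of the solution in an $L^2_{\mathrm{loc}}$ (and $\dot H^s$, $s<1/2$) sense, using only the square-summability of $r_m/m$. If you want to complete your argument you should either adopt this two-step integration by parts plus the distributional/$L^2$ reading of the leading term, or else prove the everywhere conditional convergence of $\sum_{m>0}\frac{r_m}{m}\sin(mt)$, which is a separate number-theoretic task that neither you nor the paper carries out.
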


We postpone the proof of this result in Section \ref{sec:ex}. 

\section{Proof of Theorem \ref{Theo:p=2}.}
\label{Sec:proof}

The proof is based on a fixed point argument in the space $ X^{ s}_p $. Let $ \eta \in C^{\infty}(\bbR) $ such that $ 0 \leq \eta \leq 1 $, $ \eta = 0 $ in an open neighborhood of $ (-\infty,0] $, $ \eta(t) = 1 $ for $ t \geq \pi $. Moreover for $ N \in \mathbb{N} $ we define $ \eta_N(t) = \eta(t-\pi N) $. We consider the map $ \Tau_{\eta_N}: X^s_{p} \longrightarrow X^{s}_{p} $ with $ k $-th component given by
\begin{align}
\left(\Tau_{\eta_N} (R)(t)\right)_k = &  \frac{i\eta_N(t)}{8 \pi}\int_{t}^{+\infty} \frac{\eta_N(\tau)}{\tau} \sum_{m \neq 0}\sum_{r(m)} e^{-i m\tau}e^{i\frac{\Lambda_m}{8\pi}\log 4 \tau}(R_{j_1}+\alpha_{j_1})\overline{(R_{j_2}+\alpha_{j_2})}(R_{j_3}+\alpha_{j_3}) \, d\tau  \nonumber \\ 
 & \, - \frac{i\eta_N(t)}{8 \pi}\int_{t}^{+\infty} \frac{\eta_N(\tau)}{\tau}\left(|R_{k}+\alpha_k|^2 - |\alpha_k|^2\right)(R_{k} +\alpha_k) \, d\tau. \label{equ:Tau:R}
\end{align} 
We start by presenting some bounds of  the map $ \Tau_{\eta_N} $. To do that we rewrite 
\begin{equation*}
(\Tau_{\eta_N}(R))_k = (\Tau_{\eta_N,0}(R))_k + (\Tau_{\eta_N,1}(R))_k + (\Tau_{\eta_N,2}(R))_k
\end{equation*}
where $ \Tau_{\eta_N,0} $ is independent on $ R $, $ \Tau_{\eta_N,1} $ is linear and $ \Tau_{\eta_N,2} $ is super-linear. For example
\begin{equation*}
(\Tau_{\eta_N,0}(R))_k =  \frac{i\eta_N(t)}{8 \pi} \int_{t}^{\infty} \frac{\eta_N(\tau)}{\tau} \sum_{m\neq 0}\sum_{r(m)} e^{-i\tau m}e^{i\frac{\Lambda_m}{8\pi}\log 4 \tau}  \alpha_{j_1}\bar{\alpha}_{j_2}\alpha_{j_3} \, d\tau.
\end{equation*}

The goal is to show that the map $ \Tau_{\eta_N} $ is a contraction.

\begin{Lemma}
\label{Lp:est} 
Let  $ 1 < p < + \infty $, let $ s > \max\{ 1/p, 1-1/p\} $ and let $ N \geq 0 $ be a natural number. Then the $ \hat{L}^p $ estimates are as follows: for $ \nu \geq N $
\begin{equation*}
\| \Tau_{\eta_N,0}(R)\|_{l^p(\mathbb{Z};\hat{L}^p(I_{\nu}))} \leq \frac{C}{\nu+1}   \|\alpha \|^3_{l^p(\mathbb{Z})}
\end{equation*}
\begin{align*}
\|  \Tau_{\eta_N,1}(R)\|_{l^p(\mathbb{Z};\hat{L}^P(I_{\nu}))} \leq  \frac{C}{\nu+1} \max\left\{ \|\alpha \|^2_{l^{\infty}(\mathbb{Z})}, \frac{\|\alpha \|^2_{l^p(\mathbb{Z})}}{\nu+1} \right\} \|R\|_{X^{s}_p} 
\end{align*}
%
%
and

\begin{equation*}
\| \Tau_{\eta_N,2}(R) \|_{l^p(\mathbb{Z};\hat{L}^p(I_{\nu})} \leq \frac{C}{(\nu+1)^{2}}  \max\left\{ \|\alpha \|_{l^p(\mathbb{Z})}, \frac{    \|R\|_{X^{s}_p}     }{\nu+1} \right\}  \|R\|_{X^{s}_p}^2.
\end{equation*}

\end{Lemma}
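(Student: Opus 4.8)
The plan is to split $\Tau_{\eta_N}=\Tau_{\eta_N,0}+\Tau_{\eta_N,1}+\Tau_{\eta_N,2}$ as indicated, and inside each piece to separate the \emph{oscillatory} (off-diagonal, $\sum_{m\ne 0}e^{-im\tau}$) contribution from the \emph{non-oscillatory} diagonal term coming from $(|R_k+\alpha_k|^2-|\alpha_k|^2)(R_k+\alpha_k)$. Throughout I work on a fixed interval $I_\nu$ with $\nu\ge N$, where the cutoffs $\eta_N$ are supported and $1/\tau\sim 1/(\nu+1)$. The whole estimate then rests on two independent mechanisms, one for each type of term.

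\emph{Oscillatory terms.} For a single mode $e^{-im\tau}$, $m\ne 0$, I would integrate by parts once in $\tau$. Since $\frac{d}{d\tau}\frac{e^{-im\tau}}{-im}=e^{-im\tau}$, this produces the boundary term at $\tau=t$, namely $\frac{\eta_N(t)^2}{im\,t}\,e^{-imt}$ times the slowly varying factor $e^{i\Lambda_m\log 4t/8\pi}$ and the amplitudes, plus a remainder $\frac1{im}\int_t^\infty\partial_\tau(\cdots)e^{-im\tau}\,d\tau$ which carries an extra $1/\tau$ and is therefore lower order in $(\nu+1)^{-1}$, to be treated identically. The boundary evaluation supplies the gain $\eta_N(t)/t\sim 1/(\nu+1)$. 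The key sub-lemma I would isolate is this: a function of the form $\sum_{m\ne 0}a_m\,e^{-imt}\phi_m(t)$ on $I_\nu$, with envelopes $\phi_m$ that are \emph{uniformly} smooth in $m$, has $\hat L^p(I_\nu)$ norm $\lesssim\|\{a_m\}\|_{l^p}$. This follows by choosing the $4\pi$-periodic extension obtained from a fixed cutoff $\chi_\nu$ equal to $1$ on $I_\nu$ and supported in $I_\nu^e$: on $\Pi_4$ the factor $e^{-imt}$ sits at the integer frequency $m$, so the $n$-th Fourier coefficient of the extension is $\sum_m a_m\,\widehat{\chi_\nu\phi_m}(n-m)$, and Young's inequality gives the bound provided $\sup_m\|\widehat{\chi_\nu\phi_m}\|_{l^1}<\infty$. \textbf{This uniform summability is the main obstacle}: I must show that the envelope $\frac{\eta_N(\tau)}{\tau}\,e^{i\Lambda_m\log 4\tau/8\pi}\chi_\nu(\tau)$ has Fourier coefficients that are $l^1$-summable uniformly in $m,k,\nu$. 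This is where $|\Lambda_m|\le 4\|\alpha\|_{l^\infty}^2$ and $\tau\gtrsim\nu$ on $I_\nu^e$ are used: a fixed number of $\tau$-derivatives of the log-phase and of $1/\tau$ stay bounded, giving uniform decay of the envelope's Fourier tails.

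\emph{From the sub-lemma to $\|\alpha\|_{l^p}^3$.} Applying the above with $a_m=c_{k,m}/m$, where $c_{k,m}=\sum_{z\in r(m)}\alpha_{j_1}\bar\alpha_{j_2}\alpha_{j_3}$, reduces the $\Tau_{\eta_N,0}$ bound to $\frac{1}{\nu+1}\big(\sum_k\sum_{m\ne0}|c_{k,m}|^p/|m|^p\big)^{1/p}\lesssim \frac1{\nu+1}\|\alpha\|_{l^p}^3$. Here I would use the bijection from the excerpt, writing $m=2zw$, $j_1=k-z$, $j_2=k-z-w$, $j_3=w-k$, so that $1/|m|=1/(2|z||w|)$. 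The inner sum over $r(m)$ is handled by the divisor bound $|r(m)|\lesssim_\varepsilon |m|^\varepsilon$ together with Hölder, turning $\big(\sum_{z\in r(m)}|\alpha_{j_1}\alpha_{j_2}\alpha_{j_3}|\big)^p$ into $|m|^\varepsilon\sum_{z\in r(m)}(|\alpha_{j_1}\alpha_{j_2}\alpha_{j_3}|)^p$; after collecting, the sum becomes $\sum_{z,w\ne0}(|z||w|)^{\varepsilon-p}\sum_k(|\alpha_{k-z}\alpha_{k-z-w}\alpha_{w-k}|)^p$. Summing in $k$ by Cauchy--Schwarz bounds the inner factor by $\|\alpha\|_{l^\infty}^p\|\alpha\|_{l^{2p}}^{2p}\le\|\alpha\|_{l^p}^{3p}$, while $\sum_{z,w}(|z||w|)^{\varepsilon-p}<\infty$ exactly when $p-\varepsilon>1$; this is where the hypothesis $p>1$ is consumed. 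The $\Tau_{\eta_N,1}$ and $\Tau_{\eta_N,2}$ off-diagonal contributions run along the same lines, except that each amplitude $\alpha_{j_i}$ that is replaced by a function $R_{j_i}(t)$ is no longer constant: its $\hat L^p(I_\nu)$ size is measured by $\|R_{j_i}\|_{\mathcal H^{s}_p(I_\nu)}$, which after the $l^p$-sum in $k$ contributes a factor $\|R\|_{X^s_p}/(\nu+1)$ rather than a size-one amplitude. Thus the oscillatory linear term yields $\frac{1}{\nu+1}\cdot\frac{\|\alpha\|_{l^p}^2}{\nu+1}\|R\|_{X^s_p}$ and the oscillatory super-linear terms are smaller, accounting for the second slots of the two maxima.

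\emph{Diagonal (non-oscillatory) terms.} For the resonant term there is no phase to integrate against, so I would instead estimate the integral operator directly. On $I_\nu$ I split $\int_t^\infty=\int_t^{\pi(\nu+2)}+\int_{\pi(\nu+2)}^\infty$; the running part is controlled using the boundedness of the antiderivative on $\hat L^p$ together with the gain $1/\tau\sim1/(\nu+1)$, and the tail is a constant in $t$ (a pure frequency-zero mode) bounded by $\sum_{\nu'\ge\nu}\frac{1}{\nu'+1}\|\,\cdot\,\|_{\mathcal H^{s}_p(I_{\nu'})}$. Using $\mathcal H^{s}_p\hookrightarrow C^0\hookrightarrow L^1$ and the algebra property of $\mathcal H^{s}_p$ (both requiring $s>1/p$) to handle the products $\alpha_k^2 R_k$, $\alpha_k R_k^2$ and $R_k^3$, and then $\|R\|_{\ell^p\mathcal H^s(I_{\nu'})}\le\|R\|_{X^s_p}/(\nu'+1)$, the summation $\sum_{\nu'\ge\nu}(\nu'+1)^{-2}\sim(\nu+1)^{-1}$ produces exactly $\frac{\|\alpha\|_{l^\infty}^2}{\nu+1}\|R\|_{X^s_p}$ for the linear diagonal term (first slot of the $\Tau_{\eta_N,1}$ max) and $\frac{\|\alpha\|_{l^p}}{(\nu+1)^2}\|R\|_{X^s_p}^2$ for the quadratic diagonal term (first slot of the $\Tau_{\eta_N,2}$ max), with the cubic diagonal piece giving the remaining $\frac{1}{(\nu+1)^3}\|R\|_{X^s_p}^3$. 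Taking maxima of the diagonal and off-diagonal contributions gives the three stated estimates. The threshold $s>1-1/p$ enters when closing the $l^p$ frequency-sums in the off-diagonal $R$-terms (Young's inequality for the convolution of the multiplier $1/m$ and the envelope tails against $\widehat{R}$), and $s<1$ is kept as a technical ceiling so that these multiplier sums remain summable.
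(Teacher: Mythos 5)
Your overall architecture (separate the oscillatory $m\neq 0$ convolution terms from the diagonal ones, gain $1/(\nu+1)$ from $1/\tau$ on $I_\nu$, use the divisor bound $|r(m)|\lesssim_{\eps}|m|^{\eps}$ plus H\"older over $r(m)$, and measure each $R_{j_i}$ in $\mathcal H^{s}_p$) matches the paper's, and your treatment of $\Tau_{\eta_N,0}$ and of the diagonal terms is essentially the paper's argument repackaged through your Young-type sub-lemma. The gap is in the oscillatory \emph{linear} term. First, the integration by parts in $\tau$ that drives your sub-lemma is not available once the integrand contains $R_{j_1}(\tau)$: you control only $s<1$ derivatives of $R$, so $\partial_\tau$ cannot fall on it and $R_{j_1}$ is not a ``uniformly smooth envelope''. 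The paper instead expands $R_{j_1}$ in its Fourier series on each block $I_\mu$ and extracts the decay $\langle m-l\rangle^{-1}$ from the explicit double integral $\int_{I_\nu^e}e^{-iht/2}\psi^e_\nu\int_t^{\pi(\nu+2)}e^{-i(m-l)\tau}\,d\tau\,dt$; this is where $s>1-1/p$ is actually consumed, via the convergence of $\sum_{m}\sum_{r(m)}\langle m\rangle^{-(s-\eps)q}$.

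Second, and more seriously, your accounting of the two slots of the max in the $\Tau_{\eta_N,1}$ bound is wrong. The term $\int_t^\infty\frac{\eta_N(\tau)}{\tau}\sum_{m\ne0}\sum_{r(m)}e^{-im\tau}(\cdots)R_{j_1}\bar\alpha_{j_2}\alpha_{j_3}\,d\tau$ does not localize to $\tau\approx t$: it has a tail $\sum_{\mu\ge\nu+3}\int_{I_\mu}$ which is constant in $t$ on $I_\nu$ (a pure zero mode), each block contributing $\frac{1}{\mu+1}[\tilde{\psi}_{\mu}R_{j_1}]^{\widehat{\ }}_{m}\bar\alpha_{j_2}\alpha_{j_3}$. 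Writing $\frac{1}{\mu+1}=\frac{1}{(\mu+1)^2}(\mu+1)$ and absorbing one factor into $\|(\mu+1)R\|_{\mathcal H^{s}_p(I_\mu)}$, the sum $\sum_{\mu\ge\nu}(\mu+1)^{-2}$ yields exactly one power of $(\nu+1)^{-1}$ --- there is no second power to be had --- and after H\"older in $(m,z)$ the two remaining amplitudes must be pulled out in $l^\infty$: keeping them in $l^p$ would give $\|\alpha\|^2_{l^p}(\nu+1)^{-1}\|R\|_{X^s_p}$, which is \emph{not} dominated by the stated max. So the slot $\|\alpha\|^2_{l^\infty}/(\nu+1)$ is forced by the far-field part of the oscillatory linear term itself, not only by the diagonal terms, and your claimed bound $\|\alpha\|^2_{l^p}(\nu+1)^{-2}\|R\|_{X^s_p}$ for the whole oscillatory linear piece is unobtainable. (This is precisely the term the authors single out as the source of the $l^\infty$ smallness hypothesis.) The repair is to apply to this tail the same running-plus-zero-mode splitting you already use for the diagonal terms, accepting the $l^\infty$ loss there.
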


\begin{Lemma}
\label{Beta:est}

Let  $ 1 < p < + \infty $, let $ s \in (\max\{ 1/p, 1-1/p\} , 1) $ and let $ N \geq 0 $ be a natural number. Then  the $\dot{\mathcal{H}}^{s}_{p} $ seminorm can be estimated for $ \nu \geq N $ as 
\begin{equation*}
 \| \Tau_{\eta_N,0}(R)\|_{l^p(\mathbb{Z};\dot{\mathcal{H}}^{s}_{p}(I_{\nu}))} \leq \frac{C}{\nu+1}   \| 	\alpha  \|^3_{l^p(\mathbb{Z})}
\end{equation*}
\begin{align*}
\|\Tau_{\eta_N,1}(R)\|_{l^p(\mathbb{Z};\dot{\mathcal{H}}^{s}_{p}(I_{\nu}))} \leq  \frac{C}{(\nu+1)} \max\left\{ \|\alpha \|^2_{l^{\infty}(\mathbb{Z})},  \frac{\|\alpha \|^2_{l^p(\mathbb{Z})}}{\nu+1} \right\} \|R\|_{X^{s}_p}
\end{align*}
and
\begin{equation*}
\| \Tau_{\eta_N,2}(R) \|_{l^p(\mathbb{Z};\dot{\mathcal{H}}^{s}_{p}(I_{\nu})} \leq \frac{C}{(\nu+1)^{2}}  \max\left\{ \|\alpha \|_{l^p(\mathbb{Z})}, \frac{    \|R\|_{X^{s}_p}      }{\nu+1} \right\}  \|R\|_{X^{s}_p}^2.
\end{equation*}

\end{Lemma}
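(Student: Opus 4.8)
The plan is to run the argument in strict parallel with Lemma~\ref{Lp:est}, since the three right-hand sides are literally the same; the only genuinely new feature is the homogeneous weight $|\ell|^s$ carried by the $\tau$-Fourier coefficients in the definition of $\dot{\mathcal{H}}^s_p(I_\nu)$. My starting observation is that in every summand of \eqref{equ:Tau:R} the $\tau$-oscillation is governed by $e^{-im\tau}$ with $m\in\mathbb{Z}\setminus\{0\}$, and $e^{-im\tau}$ is exactly the pure Fourier mode of frequency $-m$ on the torus $\Pi_4$. Hence, modulo the slowly varying factors $\eta_N(\tau)/\tau$ and $e^{i\Lambda_m\log 4\tau/8\pi}$ (whose $\tau$-derivatives are $O((\nu+1)^{-1})$ on $I_\nu$), the $\ell$-th $\tau$-Fourier coefficient of $(\Tau_{\eta_N,j}(R))_k$ is concentrated at $\ell=-m$, so the weight $|\ell|^s$ amounts to a factor of order $|m|^s$. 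I would make this precise by first producing an extension to $I^e_\nu$ built from a fixed smooth cutoff times the same integrand: since the slowly varying factors extend with controlled derivatives, this costs only a universal constant in the $\tilde H^{s,p}$ norm.

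Next I would exploit the oscillation against this cost. Writing the constant-in-$R$ term via the bijection \eqref{def:r(m)} as $(\Tau_{\eta_N,0}(R))_k=\sum_{z,w\neq 0}F_k^{(z,w)}$, where the $(z,w)$-summand carries the single phase $e^{-2izw\tau}$ (so $m=2zw$) and the amplitude $\alpha_{k-z}\bar\alpha_{k-z-w}\alpha_{k-w}$, one integration by parts in $\tau$ gives a gain $|m|^{-1}$ together with the expected $(\nu+1)^{-1}$ coming from $\tau^{-1}\sim(\nu+1)^{-1}$ on $I_\nu$; the boundary term at $\tau=t$ reproduces the oscillation $e^{-imt}$ in the variable $t$, and the derivative hitting the $\log$-phase only produces a further $(\nu+1)^{-1}|\Lambda_m|\lesssim (\nu+1)^{-1}\|\alpha\|_{l^\infty}^2$, hence a strictly lower-order contribution. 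Combining these facts,
\[
\|F_k^{(z,w)}\|_{\dot{\mathcal{H}}^s_p(I_\nu)}\lesssim \frac{1}{\nu+1}\,|zw|^{s-1}\,|\alpha_{k-z}\alpha_{k-z-w}\alpha_{k-w}|,
\]
and since $s<1$ the net power $|m|^{s-1}$ still decays, which is precisely what makes the homogeneous bound collapse onto the same quantities as the $\hat L^p$ bound. The linear term $\Tau_{\eta_N,1}$ and the super-linear term $\Tau_{\eta_N,2}$ are handled identically, the only change being that one, respectively two or three, of the three factors are now $R$'s measured in $X^s_p$ instead of $\alpha$'s; the $(\nu+1)$-weights built into the $X^s_p$ norm then generate the additional $(\nu+1)^{-1}$ powers and the $\max\{\cdot,\cdot\}$ alternatives seen in the statement, according to whether a given summation index is best absorbed into an $l^p$ or an $l^\infty$ norm.

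The \emph{main obstacle} I anticipate is the final rearrangement of the resonant sum in the presence of the homogeneous weight. After trading $|\ell|^s$ for $|2zw|^s$ one is left to sum $|zw|^{s-1}$ against three translated copies of $\alpha$ (or $R$), and the crude route — applying the triangle inequality in $(z,w)$ and pulling two factors into $l^\infty$ — fails, because it forces the auxiliary series $\sum_{z,w\neq 0}|zw|^{s-1}$, which diverges for every $s>0$. The correct argument must keep the $k$-summation coupled to the $(z,w)$-summation: organizing the double sum by the output frequency $\ell=-2zw$ and applying a Young/Hölder splitting of the three exponents reduces matters to clean products of $l^p$ (and $l^\infty$) norms of $\alpha$ and of $\|R\|_{X^s_p}$. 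It is exactly at this step that the hypotheses enter: $s>1/p$ supplies the product (algebra) structure $\tilde H^{s,p}(I^e_\nu)\hookrightarrow C^0(\Pi_4)$ needed to multiply the three factors, $s>1-1/p$ guarantees convergence of the dual series that controls the frequency weight against the oscillatory decay, and $s<1$ ensures the oscillatory gain $|m|^{-1}$ dominates the derivative cost $|m|^{s}$. Carrying out this bookkeeping term by term yields the three displayed estimates.
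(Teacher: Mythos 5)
Your proposal is correct and follows essentially the same route as the paper: the paper realises the gain by writing $\sum_h|h|^{sp}|\hat f_h|^p=\sum_{h\neq 0}|h|^{-(1-s)p}|\widehat{\partial_t f}_h|^p$ (so that differentiating the $\tau$-integral directly produces the integrand and the weight $|h|^{s-1}\sim|m|^{s-1}$ with $s<1$), which is the same mechanism as your integration by parts in $\tau$ paying $|m|^{s}$ against a gain of $|m|^{-1}$, and the final summation is handled exactly as you describe, by keeping the $k$-sum coupled to the divisor sum over $z\in r(m)$ (using $r_m\lesssim_\eps|m|^\eps$ and H\"older) rather than summing $|zw|^{s-1}$ crudely. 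The only slip is in attributing hypotheses: in the paper the algebra property of $\tilde H^{s,p}$ used in \eqref{ine:inf} comes from $s>1-1/p$ (i.e.\ $sq>1$), while $s>1/p$ gives the embedding into $C^0(\Pi_4)$ — but since both are assumed, this does not affect the argument.
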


\begin{proof}[Proof of Theorem \ref{Theo:p=2}]

The difficult parts of the proof are the estimates in the above lemmas. From Lemma \ref{Lp:est} and \ref{Beta:est}, together with the fact that $ \Tau_{\eta_N} $ is identically zero in $ I_{\nu} $ for $ \nu \leq N-1 $, we deduce that  
\begin{align*}
\sup_{\nu \geq 0 } \| (\nu+1) \Tau_{\eta_N}(R)\|_{l^p(\mathbb{Z}; \mathcal{H}^{s}_{p}(I_{\nu}))} \leq  C \sup_{\nu \geq N }\Bigg( \|\alpha \|^3_{l^p(\mathbb{Z})} + & \|\alpha \|^2_{l^{\infty}(\mathbb{Z})} \|R\|_{X^{s}_p} + \frac{\|\alpha \|^2_{l^p(\mathbb{Z})}}{\nu+1}\|R\|_{X^{s}_p} \\ & \, + \frac{\|\alpha \|_{l^p(\mathbb{Z})}}{(\nu+1)}\|R\|_{X^{s}_p}^2  + \frac{1}{(\nu+1)^{2}} \|R\|_{X^{s}_p}^3 \Bigg),  
\end{align*}
which can be shortly rewritten as
\begin{equation*}
\|\Tau_{\eta_N}(R)\|_{X^{s}_p} \leq C  \left( \|\alpha \|^3_{l^p(\mathbb{Z})} + \|\alpha \|^2_{l^{\infty}} \| R \|_{X^{ s}_p} +\frac{\|\alpha \|^2_{l^p}}{N+1} \| R \|_{X^{ s}_p} + \frac{\|\alpha \|_{l^p}}{(N+1)} \| R\|_{X^{ s}_p}^2 + \frac{1}{(N+1)^{2}} \| R\|_{X^{ s}_p}^3   \right).
\end{equation*}
for any natural number $ N \geq 0 $. Similarly one has 
\begin{equation*}
\|\Tau_{\eta_N}(R) - \Tau_{\eta_N}(Q)\|_{X^{s}_p} \leq C  \left(\|\alpha \|^2_{l^{\infty}}  +\frac{\|\alpha \|^2_{l^p}}{N+1} + \frac{\|\alpha \|_{l^p}}{(N+1)} (\| R\|_{X^{ s}_p} + \| Q\|_{X^{ s}_p}) + \frac{1}{(N+1)^{2}} \left(\| R\|_{X^{ s}_p}^2+\| Q\|_{X^{ s}_p}^2\right)   \right)\| R - Q\|_{X^{ s}_p}.
\end{equation*}
To show well-posedness is enough to define a complete metric space where the map $ \Tau_{\eta_N} $ is a contraction. 

\begin{itemize}

\item (Large in time well-posedness for small datum) To show large in time well-posedness for small initial datum we choose $ N = 0 $ and we define the space 
$ \mathcal{C}^{Lt} = \{ R \in X^{ s}_p $ such that $  \| R \|_{X^{ s}_p} \leq \bar{\varepsilon}  \}$. The functional 
$$ \Tau_{\eta_0} : \mathcal{C}^{Lt} \longrightarrow \mathcal{C}^{Lt} $$
is a contraction for $ \{ \alpha \}_{l^p} $ and $ \bar{\eps} $  small enough.

\item (Short in time well-posedness) To show short in time well-posedness we define the space 
$ \mathcal{C}^{St} = \{ R \in X^{ s}_p $ such that $  \| R \|_{X^{ s}_p} \leq 2 C\|\alpha\|_{l^p}^3 \}$. For $ \{ \alpha \}_{l^\infty} $  small enough, there exists $ N $ sufficiently large such that the functional 
$$ \Tau_{\eta_N} : \mathcal{C}^{St} \longrightarrow \mathcal{C}^{St} $$
is a contraction. 

\end{itemize} 

Note that in the large in time well-posedness result $ R $ satisfy the equation only for $ t \geq \pi $. But by choosing from the beginning $ \eta(x) = 1 $ for $ x \geq \delta > 0 $,  we deduce the existence of solution in the interval $[\delta, + \infty) $ for any fixed $ \delta $, due to the trivial bound $ 1/t \leq \delta^{-1 } $.

Finally uniqueness follows from the fact that solutions are obtained by a Picard iteration process. In particular if $ \eta $ and $ \bar{\eta} $ are two cut-off such that $ \eta(t) = \bar{\eta}(t) = 1 $ for $ t \geq t_{*} $ then $ n $-th iterate associated with $ \eta $ and $ \bar{\eta}$ are equal for time $ t \geq t_{*} $.

\end{proof}

Finally let us remark that the smallness in $ l^{\infty} $ of the data in the short in time result is due to the term $ C \| \alpha|^2_{l^{\infty}} \| R \|_{X^{ s}_p} $.

\subsection{Proof of Lemma \ref{Lp:est} and \ref{Beta:est}}

In this subsection we prove the two main lemmas. Let start with Lemma \ref{Lp:est}.

\subsubsection{Proof of Lemma \ref{Lp:est}}

\begin{proof}

We divide the proof in three parts where we tackle the terms  $ \Tau_0 $,  $ \Tau_1 $ and  $ \Tau_2 $ separately. In particular we show all the details for the estimates of  $ \Tau_0 $ and  $ \Tau_1 $. In the last part we explain how to deduce the estimates for $ \Tau_2 $ from the one of $ \Tau_1 $.

\item

\paragraph*{Estimates of $ \Tau_0 $}

Recalling the definition of $ \Tau_0 $ and by an integration by parts we deduce

\begin{align}
(\Tau_0(R))_k = & \, \frac{i \eta_N(t)}{8 \pi} \int_{t}^{\infty} \frac{1}{\tau} \sum_{m\neq 0}\sum_{r(m)} \eta_{N}(\tau) e^{-i\tau m} e^{i\frac{\Lambda_m}{8\pi}\log 4 \tau} \alpha_{j_1}\bar{\alpha}_{j_2}\alpha_{j_3} \, d\tau  \nonumber \\
= & \,  -\frac{i \eta^2_N(t)}{8 \pi}  \sum_{m\neq 0}\sum_{r(m)} \frac{e^{-i t m}}{i t m} e^{i\frac{\Lambda_m}{8\pi}\log 4 t} \alpha_{j_1}\bar{\alpha}_{j_2}\alpha_{j_3} \label{dec:F0}  +  \frac{i \eta_N(t)}{8 \pi} \int_{t}^{\infty} \sum_{m\neq 0}\sum_{r(m)} \eta'_{N}(\tau) \frac{e^{-i\tau m}}{i m \tau} e^{i\frac{\Lambda_m}{8\pi}\log 4 \tau}\alpha_{j_1}\bar{\alpha}_{j_2}\alpha_{j_3} \, d\tau   \\ & \, -  \frac{i \eta_N(t)}{8 \pi}  \int_{t}^{\infty} \sum_{m\neq 0}\sum_{r(m)} \frac{e^{-i\tau m}}{i m \tau^2 } \left(\eta_N(\tau) - \frac{i \eta_N(\tau) \Lambda_m}{8 \pi} \right)e^{i\frac{\Lambda_m}{8\pi}\log 4 \tau}\alpha_{j_1}\bar{\alpha}_{j_2}\alpha_{j_3} \, d\tau.  \nonumber 
\end{align}
In the sequel it is useful to decompose the integral on $ (t,+\infty) $ as the sum of integrals on $ I_{\mu} $. To do that let us introduce a partition of unity $ \left\{\psi_{\mu} \right\} $ on $ [\pi, + \infty) $  defined as $ \psi_{\mu}(t) = \eta_{\mu}(t) - \eta_{\mu+1}(t) $. It holds that $ 0 \leq \psi_{\eta} \leq 1 $, $ \psi_{\mu}$ is smooth and supported in $ I_{\mu} = [\pi \mu, \pi(\mu +2)] $. Moreover let us denote by
$$ \psi^e_{\nu}  = \sum_{d = -1}^1 \psi_{\nu+d} = \eta_{\nu-1} - \eta_{\nu+1} = \eta_{\nu-1}(1-\eta_{\nu+1}). $$ 
Observe that $ \psi_{\nu}^e$ is  smooth, with uniformly bounded derivative, supported in $ I^e_{
\nu} $, identically $ 1 $ in $ I_{\nu} $ and $ 0 \leq \psi_{\nu}^e \leq 1 $ .

Let us recall that the $ \hat{L}^p $ norm in $ I_{\nu} $ is define as the infimum of the  $ \hat{L}^p $ norm in $ I^e_{\nu} $ of all the possible extensions. In particular for $ \nu > N +1  $, we use the extension $ (\tilde{\Tau}_{0}(R))_k = \psi^{e}_{\nu}(\Tau_0(R))_k$. We deduce that
\begin{align*}
\| \Tau_0(R) \|_{l^p(\hat{L}^p(I_{\nu}))}^p \leq  \sum_{k} \sum_{h} \left|\left[\psi^{e}_{\nu}(\Tau_0(R))_k\right]^{\widehat{ }}_h\right|^p,
\end{align*}
where $ [.]^{\widehat{}}_h $ denotes the $h$-th Fourier coefficient. Let us compute $[\psi^{e}_{\nu}(\Tau_0(R))_k]^{\widehat{ }}_h$. First of all note that $ \eta_{N}(t) = 1  $ for $ t \geq (N+1)\pi $, in particular the second term of \eqref{dec:F0} is zero. Denote by 
\begin{equation*} 
\tilde{\psi}^e_{\nu}(t) = \frac{\nu+1}{8 \pi t} e^{i\frac{\Lambda_m}{8\pi}\log 4 t} \psi^e_{\nu}, \quad \text{ which satisfies } \quad  \| \tilde{\psi}^e_{ \nu}\|_{\tilde{H}^{s,p}} \leq C  (1 + \|\alpha \|_{l^{\infty}}^2),
\end{equation*}
with $ C $ independent of $ \nu \geq 1 $ and $ m $.  For the first term on the right hand side of \eqref{dec:F0}, we have
\begin{align*}
\frac{\psi^e_{\nu}(t)}{8 \pi}  \sum_{m\neq 0}\sum_{r(m)} \frac{e^{-i t m}}{i t m} e^{i\frac{\Lambda_m}{8\pi}\log 4 t} \alpha_{j_1}\bar{\alpha}_{j_2}\alpha_{j_3} = & \,  \frac{1}{\nu+1}\sum_{m\neq 0}\sum_{r(m)} \sum_{l} \frac{e^{-i t (m-l/2)}}{i m} [\tilde{\psi}^e_{\nu}]^{\widehat{}}_{l} \alpha_{j_1}\bar{\alpha}_{j_2}\alpha_{j_3}  \\ = & \,  \frac{1}{\nu+1} \sum_{h} e^{-ith/2}\left(\sum_{\substack{l \in \mathbb{Z} \\ l+h \text{ even} \\ l+h  \neq 0} }\sum_{r((h+l)/2)}  \frac{2}{i (l+h) }[\tilde{\psi}^e_{\nu}]^{\widehat{}}_{l}\alpha_{j_1}\bar{\alpha}_{j_2}\alpha_{j_3} \right).
\end{align*}
As we already mention the second term is zero for $\nu > N+1 $. For the last term in \eqref{dec:F0}, we integrate by parts to obtain, for $ h \neq 0 $
\begin{align*}
& \left| \int_{I_{\nu}^e} e^{-iht/2}\psi^{e}_{\nu}\int_{t}^{\infty} \sum_{m\neq 0}\sum_{r(m)} \frac{e^{-i\tau m}}{i m \tau^2 }\left(1 - \frac{\Lambda_m}{8 \pi} \right)e^{i\frac{\Lambda_m}{8\pi}\log 4 \tau}  \alpha_{j_1}\bar{\alpha}_{j_2}\alpha_{j_3} \, d\tau dt \right|  \\ & \, =   \vast| \int_{I_{\nu}^e} \frac{e^{-iht/2}}{ih/2}(\psi^{e}_{\nu})'\int_{t}^{\infty} \sum_{m\neq 0}\sum_{r(m)} \frac{e^{-i\tau m}}{i m \tau^2 }\left(1 - \frac{\Lambda_m}{8 \pi} \right)e^{i\frac{\Lambda_m}{8\pi}\log 4 \tau}  \alpha_{j_1}\bar{\alpha}_{j_2}\alpha_{j_3} \, d\tau dt \\ & \quad \quad + \int_{I_{\nu}^e} \frac{e^{-iht/2}}{ih/2} \psi^{e}_{\nu} \sum_{m\neq 0}\sum_{r(m)} \frac{e^{-i t m}}{i m t^2 } \left(1 - \frac{\Lambda_m}{8 \pi} \right)e^{i\frac{\Lambda_m}{8\pi}\log 4 t}  \alpha_{j_1}\bar{\alpha}_{j_2}\alpha_{j_3} dt\vast| \\
 & \, \lesssim   \frac{1}{ \langle h \rangle (\nu+1) } \sum_{m\neq 0}\sum_{r(m)} \frac{1}{|m|} \left|\alpha_{j_1}\bar{\alpha}_{j_2}\alpha_{j_3} \right|(1+\|\alpha\|_{l^{\infty}}^2).
\end{align*}
Then, taking $ 1/p + 1/q = 1 $ and recalling that the number of elements of $ r(m) $ are less or equal to $ C_{\eps} m^{\eps} $ for any $ \eps > 0 $, we have

\begin{align*}
\sum_{k}\|(\Tau^{0}(R))_{k} \|_{\hat{L}^p(I_{\nu})}^p \leq  & \, \frac{1}{(\nu+1)^p} \sum_{k} \sum_{h} \left| \sum_{\substack{l \in \mathbb{Z} \\ l+h \text{ even} \\ h+l \neq 0 }  }\sum_{r((h+l)/2)}  \frac{2}{i (l+h)\langle l \rangle } \langle l \rangle [\tilde{\psi}^e_{\nu}]^{\widehat{}}_{l} \alpha_{j_1}\bar{\alpha}_{j_2}\alpha_{j_3} \right|^p \\
& \, +  \frac{1}{(\nu+1)^p} \sum_{k}\sum_{h}  \frac{1}{ \langle h \rangle^p } \left|\sum_{m\neq 0}\sum_{r(m)} \frac{1}{|m|} \left|\alpha_{j_1}\bar{\alpha}_{j_2}\alpha_{j_3}\right|\right|^p \\
\leq & \, \frac{1}{(\nu+1)^p} \sum_{k} \sum_{h} \left( \sum_{\substack{l \in \mathbb{Z} \\ l+h \text{ even} \\ h+l \neq 0} }\sum_{r((h+l)/2)}  \frac{2^q}{|l+h|^q\langle l \rangle^q } \right)^{\frac{p}{q}}  \sum_{\substack{l \in \mathbb{Z} \\ l+h \text{ even} \\ h+l \neq 0} }\sum_{r((h+l)/2)} \langle l \rangle^p \left( [\tilde{\psi}^e_{\nu}]^{\widehat{}}_{l}\right)^p |\alpha_{j_1}\bar{\alpha}_{j_2}\alpha_{j_3}|^p  \\
& \, +  \frac{1}{(\nu+1)^p} \sum_{k}\left(\sum_{h}  \frac{1}{ \langle h \rangle^p }\right) \sum_{m\neq 0}\sum_{r(m)} \left|\alpha_{j_1}\bar{\alpha}_{j_2}\alpha_{j_3}\right|^p\left( \sum_{m \neq 0}\sum_{r(m)} \frac{1}{|m|^{q}}\right)^{\frac{p}{q}} \\
\lesssim & \, \frac{1}{(\nu+1)^p} \sum_{k} \sum_{h} \frac{1}{\langle h \rangle^{p-\eps}} \sum_{\substack{l \in \mathbb{Z} \\ l+h \text{ even} \\ h+l \neq 0 } }\sum_{r((h+l)/2)} \langle l \rangle^p \left([\tilde{\psi}^e_{\nu}]^{\widehat{}}_{l}\right)^p |\alpha_{j_1}\bar{\alpha}_{j_2}\alpha_{j_3}|^p + \frac{1}{\nu^p} \|\alpha \|_{l^p}^3 \left( \sum_{m \neq 0} \frac{1}{|m|^{q-\eps}}\right)^{\frac{p}{q}} \\
\lesssim & \, \frac{1}{(\nu+1)^p} \left(\sum_{k} \sum_{h}  \sum_{\substack{l \in \mathbb{Z} \\ l+h \text{ even} \\ h+l \neq 0} }\sum_{r((h+l)/2)} \langle l \rangle^p \left([\tilde{\psi}^e_{\nu}]^{\widehat{}}_{l}\right)^p |\alpha_{j_1}\bar{\alpha}_{j_2}\alpha_{j_3}|^p\right)\sup_{h}\frac{1}{\langle h \rangle^{p-\eps}} + \frac{1}{\nu^p} \|\alpha \|_{l^p}^3 \\ 
\lesssim & \,  \frac{1}{(\nu+1)^p} \left(\sum_{k} \sum_{m \neq 0}  \sum_{l}\sum_{r(m)} \langle l \rangle^p \left([\tilde{\psi}^e_{\nu}]^{\widehat{}}_{l}\right)^p |\alpha_{j_1}\bar{\alpha}_{j_2}\alpha_{j_3}|^p\right) + \frac{1}{\nu^p} \|\alpha \|_{l^p}^3 \\ 
\lesssim & \,  \frac{1}{(\nu+1)^p} \left( \sum_{l} \langle l \rangle^p \left([\tilde{\psi}^e_{\nu}]^{\widehat{}}_{l}\right)^p  \right) \left(\sum_{k} \sum_{m\neq 0}  \sum_{r(m)}  |\alpha_{j_1}\bar{\alpha}_{j_2}\alpha_{j_3}|^p\right) + \frac{1}{\nu^p} \|\alpha\|_{l^p}^3  \\
\lesssim & \, \frac{1}{(\nu+1)^p} \|\alpha\|_{l^p}^3,
\end{align*}  
where we choose $ \eps < q - 1 $.

Let us consider the case $ \nu \in \{ N, N+1 \} $. Notice that $ \Tau_0(R)(t) $ is zero for $ t \leq \pi N $. Then it is enough to multiply $ \Tau_0 $ by $ (1-\eta_{\nu+1}) $ to extend $ \Tau_0 $ to a periodic function in $ I_{\nu}^e $ for $ \nu \in \{ N, N+1 \} $.
While estimating the $ l^p(\hat{L}^p)$ norm of $ \Tau_0 $ the only new term is 
\begin{equation*}
\eta_N(t)(1-\eta_{\nu+1}) \int_{t}^{\infty} \sum_{m\neq 0}\sum_{r(m)} \eta'_{N}(\tau) \frac{e^{-i\tau m}}{i m \tau} e^{i\frac{\Lambda_m}{8\pi}\log 4 \tau} \alpha_{j_1}\bar{\alpha}_{j_2}\alpha_{j_3} \, d\tau.
\end{equation*}
Notice that the $0$-th Fourier coefficient is bounded by 
\begin{equation*}
\frac{1}{(\nu+1) } \sum_{m\neq 0}\sum_{r(m)} \frac{1}{|m|} \left|\alpha_{j_1}\bar{\alpha}_{j_2}\alpha_{j_3}\right|.
\end{equation*}
Let $ h \neq 0 $, then the $h$-th Fourier coefficient  
\begin{align*}
& \left| \int_{I_{\nu}^e} e^{-iht/2}\eta_N(t)(1-\eta_{\nu+1}) \int_{t}^{\infty} \sum_{m\neq 0}\sum_{r(m)} \eta'_{N}(\tau) \frac{e^{-i\tau m}}{i m \tau} e^{i\frac{\Lambda_m}{8\pi}\log 4 \tau} \alpha_{j_1}\bar{\alpha}_{j_2}\alpha_{j_3} \, d\tau dt \right|  \\ & \, =   \vast| \int_{I_{\nu}^e} \frac{e^{-iht/2}}{ih/2}\partial_t(\eta_N(t)(1-\eta_{\nu+1})) \int_{t}^{\infty} \sum_{m\neq 0}\sum_{r(m)} \eta'_{N}(\tau) \frac{e^{-i\tau m}}{i m \tau} e^{i\frac{\Lambda_m}{8\pi}\log 4 \tau} \alpha_{j_1}\bar{\alpha}_{j_2}\alpha_{j_3} \, d\tau dt \\ & \quad \quad + \int_{I_{\nu}^e} \frac{e^{-iht/2}}{ih/2} \eta_N(t)(1-\eta_{\nu+1}) \sum_{m\neq 0}\sum_{r(m)} \eta'_{N}(t) \frac{e^{-i t m}}{i m t} e^{i\frac{\Lambda_m}{8\pi}\log 4 t} \alpha_{j_1}\bar{\alpha}_{j_2}\alpha_{j_3}  dt\vast| \\
& \, \lesssim   \frac{1}{ \langle h \rangle (\nu+1) } \sum_{m\neq 0}\sum_{r(m)} \frac{1}{|m|} \left|\alpha_{j_1}\bar{\alpha}_{j_2}\alpha_{j_3}\right|,
\end{align*}
which can be treated as before.
This concludes the proof for the term $ \Tau_0 $ because we can argue as for $ \nu > N+1 $.

\paragraph*{Estimates of $ \Tau_1 $} 

The linear term $ \Tau_1 $ is the sum of five terms, more precisely it reads
\begin{align}
(\Tau_1(R))_k = & \, \frac{i \eta_N(t)}{8 \pi} \int_{t}^{\infty} \frac{\eta_{N}(\tau)}{\tau}\sum_{m}\sum_{r(m)}  e^{-i\tau m} e^{i\frac{\Lambda_m}{8\pi}\log 4 \tau} R_{j_1}\bar{\alpha}_{j_2}\alpha_{j_3} \, d\tau + \frac{i \eta_N(t)}{8 \pi} \int_{t}^{\infty} \frac{\eta_{N}(\tau)}{\tau}\sum_{m}\sum_{r(m)}  e^{-i\tau m} e^{i\frac{\Lambda_m}{8\pi}\log 4 \tau} \alpha_{j_1}\bar{R}_{j_2}\alpha_{j_3} \, d\tau  \nonumber \\ &  \, + \frac{i \eta_N(t)}{8 \pi} \int_{t}^{\infty} \frac{\eta_{N}(\tau)}{\tau}\sum_{m}\sum_{r(m)}  e^{-i\tau m} e^{i\frac{\Lambda_m}{8\pi}\log 4 \tau} \alpha_{j_1}\bar{\alpha}_{j_2}R_{j_3} \, d\tau   - \frac{i \eta_N(t)}{8 \pi} \int_{t}^{\infty} \frac{\eta_{N}(\tau)}{\tau} \alpha_k \bar{R}_k \alpha_k \, d \tau \label{MAMO} \\ & \, - \frac{i \eta_N(t)}{8 \pi} \int_{t}^{\infty} \frac{\eta_{N}(\tau)}{\tau} \alpha_k R_k \bar{\alpha}_k \, d \tau.	 \nonumber
\end{align}	 
Let us denote the first term of the right hand side by $ (F(R))_k $. In the following we estimates only the term $ F(R) $. The estimate of the other terms follow similarly. As in the previous case, we start by considering $ \nu > N+1 $. We have
\begin{equation*}
\| F(R)\|_{l^p(\mathbb{Z};\hat{L}^p(I_{\nu}))}^p \leq \sum_{k}\sum_{h} \left|[\psi^e_{\nu}(F(R))_k]^{\widehat{}}_h\right|^p.
\end{equation*}
First of all, we note that $ \psi_{\nu}^e (F(R))_k $ is supported in the interval $ I_{\nu}^{e} $, so we can decompose $ (F(R))_k $ as the sum of a time dependent part plus a constant as follows   
\begin{align}
(F(R))_k = & \, \frac{i}{8 \pi}\int_{t}^{\infty} \frac{1}{\tau}\sum_{m}\sum_{r(m)} e^{-i\tau m} e^{i\frac{\Lambda_m}{8\pi}\log 4 \tau} R_{j_1}\bar{\alpha}_{j_2}\alpha_{j_3} \, d\tau \nonumber  \\
= & \,\frac{i}{8 \pi} \int_{t}^{\infty} \sum_{\mu \geq \nu -2}\frac{\psi_{\mu}}{\tau}\sum_{m}\sum_{r(m)} e^{-i\tau m}e^{i\frac{\Lambda_m}{8\pi}\log 4 \tau} R_{j_1}\bar{\alpha}_{j_2}\alpha_{j_3} \, d\tau \nonumber \\
= & \, \frac{i}{8 \pi} \sum_{\nu-2 \leq \mu \leq \nu + 2}  \int_{t}^{\infty} \frac{\psi_{\mu}}{\tau}\sum_{m}\sum_{r(m)} e^{-i\tau m} e^{i\frac{\Lambda_m}{8\pi}\log 4 \tau} R_{j_1}\bar{\alpha}_{j_2}\alpha_{j_3} \, d\tau \nonumber \\ & \, + \frac{i}{8 \pi} \sum_{ \mu \geq \nu + 3}  \int_{I_{\mu}} \frac{\psi_{\mu}}{\tau}\sum_{m}\sum_{r(m)} e^{-i\tau m} e^{i\frac{\Lambda_m}{8\pi}\log 4 \tau} R_{j_1}\bar{\alpha}_{j_2}\alpha_{j_3} \, d\tau. \label{dec:F1}
\end{align}
Let us start with the constant term. Note that 
\begin{align*}
\frac{1}{8 \pi} \int_{I_{\mu}} \frac{\psi_{\mu}}{\tau}\sum_{m \neq 0}\sum_{r(m)} e^{-i\tau m} e^{i\frac{\Lambda_m}{8\pi}\log 4 \tau} R_{j_1}\bar{\alpha}_{j_2}\alpha_{j_3} = & \, \frac{1}{\mu+1} \sum_{m \neq 0}\sum_{r(m)} \left( \int_{I_{\mu}} e^{-i\tau m} \tilde{\psi}_{\mu} R_{j_1} \, d\tau \right) \bar{\alpha}_{j_2}\alpha_{j_3} \\
= & \,  \frac{1}{\mu+1} \sum_{m \neq 0}\sum_{r(m)}  [\tilde{\psi}_{\mu} R_{j_1}]^{\widehat{ }}_m \bar{\alpha}_{j_2}\alpha_{j_3},  
\end{align*}
where
\begin{equation*} 
\tilde{\psi}_{\mu}(t) = \frac{\mu+1}{8 \pi t} e^{i\frac{\Lambda_m}{8\pi}\log 4 t} \psi_{\mu}, \quad \text{ which satisfies } \quad  \| \tilde{\psi}_{ \mu}\|_{\tilde{H}^{s,p}} \leq C  (1 + \|\alpha\|_{l^{\infty}}^2),
\end{equation*}
with $ C $ independent of $ \mu \geq 0 $ and $ m $.

By using the previous computation we have
\begin{align*}
  \sum_k \vast| \sum_{ \mu \geq \nu + 3} & \int_{I_{\mu}} \frac{\psi_{\mu}}{8 \pi \tau}\sum_{m \neq 0}\sum_{r(m)} e^{-i\tau m} 
  e^{i\frac{\Lambda_m}{8\pi}\log 4 \tau}
  R_{j_1}\bar{\alpha}_{j_2}\alpha_{j_3} \vast|^p \lesssim \sum_{k}  \left| \sum_{ \mu \geq \nu + 3} \frac{1}{\mu+1} \sum_{m \neq 0}\sum_{r(m)}  [\tilde{\psi}_{\mu} R_{j_1}]^{\widehat{ }}_m \bar{\alpha}_{j_2}\alpha_{j_3} \right|^p \\
\lesssim & \, \sum_{k} \left| \sum_{ \mu \geq \nu + 3} \frac{1}{(\mu+1)^{2}} \sum_{m \neq 0}\sum_{r(m)}  \frac{\langle m \rangle^{s}}{\langle m \rangle^{s}} \mu [\tilde{\psi}_{\mu} R_{j_1}]^{\widehat{ }}_m  \bar{\alpha}_{j_2}\alpha_{j_3} \right|^p \\
\lesssim & \, \sum_{k} \left| \sum_{ \mu \geq \nu + 3} \frac{1}{(\mu+1)^{2}} \sum_{m \neq 0}\sum_{r(m)}  \frac{\langle m \rangle^{s}}{\langle m \rangle^{s}} \mu [\tilde{\psi}_{\mu} R_{j_1}]^{\widehat{ }}_m  \right|^p \|\alpha \|_{l^{\infty}}^{2p} \\
\lesssim & \, \left| \sum_{ \mu \geq \nu + 3}  \frac{1}{(\mu+1)^{2}} \left| \sum_{k}  \left|\sum_{m \neq 0}\sum_{r(m)}  \frac{\langle m \rangle^{s}}{\langle m \rangle^{s}} \mu [\tilde{\psi}_{\mu} R_{j_1}]^{\widehat{ }}_m \right|^p \right|^{\frac{1}{p}} \right|^p \|\alpha \|_{l^{\infty}}^{2p} \\
\lesssim & \, \frac{1}{(\nu+1)^{p}}\sup_{\mu}\left\{ \sum_{k} \sum_{m\neq 0} \sum_{r(m)} \left|\frac{\langle m \rangle^{s}}{\langle m \rangle^{\eps}} \mu [\tilde{\psi}_{\mu} R_{j_1}]^{\widehat{ }}_m  \right|^p \left( \sum_{m \neq 0}\sum_{r(m)} \frac{1}{\langle m \rangle^{(s-{\eps})q}}\right)^{\frac{p}{q}} \right\} \|\alpha \|_{l^{\infty}}^{2p} \\
\lesssim & \, \frac{1}{(\nu+1)^{ p}}\sup_{\mu}\left\{  \sum_{m\neq 0} \sum_{r(m)} \sum_{k}\left|\frac{\langle m \rangle^{s}}{\langle m \rangle^{\eps}} \mu [\tilde{\psi}_{\mu} R_{j_1}]^{\widehat{ }}_m  \right|^p  \right\} \|\alpha\|_{l^{\infty}}^{2p} \\
\lesssim & \, \frac{1}{(\nu+1)^{ p}}\sup_{\mu}\left\{  \sum_{m\neq 0} \sum_{k}\left| \langle m \rangle^{s} \mu [\tilde{\psi}_{\mu} R_{j_1}]^{\widehat{ }}_m  \right|^p  \right\} \|\alpha \|_{l^{\infty}}^{2p} \\
\lesssim & \, \frac{\|\alpha \|_{l^{\infty}}^{2p}}{(\nu+1)^{ p}} \sup_{\mu} \|\mu R\|_{l^p(\mathbb{Z}; \mathcal{H}^{s}_p(I_{\mu}))}^p,
\end{align*} 
where we recall that $ j_1 = k-z $ for $ z \in r(m) $. Moreover in the last inequality we use the fact that for any function $ f \in \mathcal{H}^s_p(I_{\mu}) $ and $ \psi \in C^{\infty}_c((\mu \pi, (\mu +2 ) \pi )) $ the function $ f \psi (t) = f(t)\psi(t) $ in $ I_{\mu} $ and $ f \psi (t) = 0 $ in $ I_{\mu}^e \setminus I_{\mu} $ satisfies  
\begin{equation}
\label{ine:inf}
\|\psi f  \|_{\tilde{H}^{s,p}(I_{\mu}^e)} \leq C\| \psi \|_{\tilde{H}^{s,p}(I_{\mu}^e)} \inf_{\substack{\tilde{f}\in \tilde{H}^{s,p}(I^{e}_{\nu}),  \\ \tilde{f}|_{I_{\nu}}  = f}}  \| \tilde{f} \|_{\tilde{H}^{s,p}(I_{\nu}^e)} = C\| \psi \|_{\tilde{H}^{s,p}(I_{\mu}^e)} \| f \|_{\mathcal{H}^s_p(I_{\mu})}, 
\end{equation}
for $ s > 1-1/p $. Inequality \eqref{ine:inf} holds true because for any $ \tilde{f}\in \tilde{H}^{s,p}(I^{e}_{\nu}) $ such that $ \tilde{f}|_{I_{\nu}}  = f $, it holds $ \tilde{f} \psi = f \psi $. Moreover using the fact that $ \tilde{H}^{s,p} $ is an algebra for $ s > 1-1/p $, we have
\begin{equation*}
\|\psi f  \|_{\tilde{H}^{s,p}(I_{\mu}^e)} = \|\psi \tilde{f}  \|_{\tilde{H}^{s,p}(I_{\mu}^e)} \leq C\|\psi  \|_{\tilde{H}^{s,p}(I_{\mu}^e)}\|\tilde{f}  \|_{\tilde{H}^{s,p}(I_{\mu}^e)}.
\end{equation*} 
Taking the infimum on both sides, we deduce \eqref{ine:inf}. 

The estimate for the second term of \eqref{dec:F1} follows then from the fact that $ \psi^e_{\nu} $ is bounded in $ \hat{L}^p(I_{\nu}^e)$ independently of $ \nu $. 

Let us now treat the remaining terms of \eqref{dec:F1}. In other words the once with $ \mu \in [\nu-2,\nu+2] $. Without loss of generality we consider only the case $ \mu = \nu $. Let us compute the $ h $-th Fourier coefficient
\begin{align*}
\int_{I_{\nu}^e} e^{-i \frac{h}{2}t} \psi^e_{\nu}(t)& \int_{t}^{\pi(\nu+2)} \frac{\psi_{\nu}(\tau)}{8 \pi \tau} \sum_{m\neq 0} \sum_{r(m)} e^{-i m\tau} e^{i\frac{\Lambda_m}{8\pi}\log 4 \tau} R_{j_1} \bar{\alpha}_{j_2} \alpha_{j_3} \, d\tau \, dt \\ 
= & \, \int_{I_{\nu}^e} e^{-i \frac{h}{2}t} \psi^e_{\nu}(t)\int_{t}^{\pi(\nu+2)} \frac{1}{(\nu+1)^{2}} \sum_{m\neq 0} \sum_{r(m)} \sum_{l} e^{-i (m-l) \tau} [(\nu+1) \tilde{\psi_{\nu}} R_{j_1}]^{\widehat{ }}_l \bar{\alpha}_{j_2} \alpha_{j_3} \, d\tau \, dt  \\
= & \,  \frac{1}{(\nu+1)^{2}} \sum_{m\neq 0} \sum_{r(m)} \sum_{l} \left( \int_{I_{\nu}^e} e^{-i \frac{h}{2}t} \psi^e_{\nu}(t)\int_{t}^{\pi(\nu+2)}  e^{-i (m-l) \tau} \, d\tau \, dt  \right) [(\nu+1) \tilde{\psi_{\nu}}  R_{j_1}]^{\widehat{ }}_l \bar{\alpha}_{j_2} \alpha_{j_3}.
\end{align*}   
For $ h \neq 0$ and by two integration by parts we have 
\begin{align*}
\left|\int_{I_{\nu}^e} e^{-i \frac{h}{2}t} \psi^e_{\nu}(t)\int_{t}^{\pi(\nu+2)}  e^{-i (m-l) \tau} \, d\tau \, dt \right| \lesssim & \, \left|\int_{I_{\nu}^e} \frac{2}{h } e^{-i \frac{h}{2}t} (\psi^e_{\nu})'(t)\int_{t}^{\pi(\nu+2)}  e^{-i (m-l) \tau} \, d\tau \, dt \right| \\
& \, + \left|\int_{I_{\nu}^e} \frac{2}{h}e^{-i \frac{h}{2}t} \psi^e_{\nu}(t)  e^{-i (m-l) t}  dt \right| \\
\leq & \, \frac{1}{h} \frac{1}{\langle m-l \rangle} + \frac{1}{h} \frac{1}{\langle m - l + h/2 \rangle} . 
\end{align*}
We deduce that 
\begin{align*}
\sum_{k} \sum_{h} & \left|\left[\int_{t}^{\pi(\nu+2)} \frac{\psi_{\nu}}{8 \pi \tau}\sum_{m}\sum_{r(m)} e^{-i\tau m} e^{i\frac{\Lambda_m}{8\pi}\log 4 \tau} R_{j_1}\bar{\alpha}_{j_2} \alpha_{j_3} \, d\tau \right]^{\widehat{ }}_h \right|^p  \\
\lesssim & \, \frac{1}{(\nu+1)^{(2)p}}\sum_{k} \sum_{h}\left( \sum_{m}\sum_{r(m)}\sum_{l} \frac{1}{\langle h \rangle} \left( \frac{1}{\langle m-l \rangle} + \frac{1}{\langle m - l + h/2 \rangle} \right)\frac{\langle l \rangle^s }{\langle l \rangle^s} [(\nu+1) \tilde{\psi}_{\nu} R_{j_1}]^{\widehat{ }}_l \bar{\alpha}_{j_2} \alpha_{j_3} \right)^p \\
\lesssim & \, \frac{\|\alpha\|_{l^p}^{2p}}{(\nu+1)^{(2)p}}\sup_{\mu} \|(\mu +1) R\|_{l^p(\mathbb{Z}; \mathcal{H}^{s}_p(I_{\mu}))}^p.  
\end{align*}
Note that in the last estimate we use H\"older's inequality and the restriction to $ qs > 1 $ i.e. $ s > 1- 1/p $ and \eqref{ine:inf}.

Let us now consider the case $ \nu \in \{N, N+1 \}$. As before, we extend $ F(R) $ by $(1-\eta_{\nu+1}) F(R) $. Moreover we decompose $ F_k(R) $ as the sum of a time dependent part plus a constant as follows   
\begin{align*}
(F(R))_k = & \, \frac{i\eta_{N}(t)}{8 \pi}\int_{t}^{\infty} \frac{\eta_{N}(\tau)}{\tau}\sum_{m}\sum_{r(m)} e^{-i\tau m} e^{i\frac{\Lambda_m}{8\pi}\log 4 \tau} R_{j_1}\bar{\alpha}_{j_2} \alpha_{j_3} \, d\tau \nonumber  \\
= & \,\frac{i\eta_{N}(t)}{8 \pi} \int_{t}^{\infty} \sum_{\mu \geq N }\frac{\psi_{\mu}}{\tau}\sum_{m}\sum_{r(m)} e^{-i\tau m}e^{i\frac{\Lambda_m}{8\pi}\log 4 \tau} R_{j_1}\bar{\alpha}_{j_2} \alpha_{j_3} \, d\tau \nonumber \\
= & \, \frac{i\eta_{N}(t)}{8 \pi} \sum_{N \leq \mu \leq \nu +2 }  \int_{t}^{\infty} \frac{\psi_{\mu}}{\tau}\sum_{m}\sum_{r(m)} e^{-i\tau m} e^{i\frac{\Lambda_m}{8\pi}\log 4 \tau} R_{j_1}\bar{\alpha}_{j_2} \alpha_{j_3} \, d\tau \\ & \, + \frac{i\eta_{N}(t)}{8 \pi} \sum_{ \mu \geq \nu + 3}  \int_{I_{\mu}} \frac{\psi_{\mu}}{\tau}\sum_{m}\sum_{r(m)} e^{-i\tau m} e^{i\frac{\Lambda_m}{8\pi}\log 4 \tau} R_{j_1}\bar{\alpha}_{j_2} \alpha_{j_3} \, d\tau.
\end{align*}
From this point we can argue as for the case $ \nu > N + 1 $. 

\paragraph*{Estimates of $ \Tau_2 $} The functional $ \Tau_2 $ is the sum of eight terms, six of them are quadratic in $ R $ and two are cubic. We will just look at the most delicate one. Namely 
\begin{equation*}
G_k(R) = \frac{i\eta_{N}(t)}{8 \pi} \int_{t}^{\infty} \frac{\eta_{N}(\tau)}{\tau}\sum_{m \neq 0}\sum_{r(m)} e^{-i\tau m} e^{i\frac{\Lambda_m}{8\pi}\log 4 \tau} R_{j_1}\bar{R}_{j_2}R_{j_3} \, d\tau.
\end{equation*} 
For $ \nu > N +1 $ we consider the extension $ \psi^e_{\nu}G(R) $ in $ I_{\nu}^e$. We have 
\begin{equation*}
\| G(R)\|_{l^p(\mathbb{Z};\hat{L}^p(I_{\nu}))}^p \leq \sum_{k}\sum_{h} \left|[\psi^e_{\nu}G(R)]^{\widehat{ }}_h\right|^p 
\end{equation*}
First of all we note that $ \psi_{\nu}^e (G(R))_k $ is supported in the interval $ I_{\nu}^{e} $, so we can decompose $ G(R) $ as the sum of a time dependent part plus a constant as follows  
\begin{align}
(G(R))_k = & \, \int_{t}^{\infty} \frac{1}{8 \pi \tau}\sum_{m \neq 0}\sum_{r(m)} e^{-i\tau m} e^{i\frac{\Lambda_m}{8\pi}\log 4 \tau} R_{j_1}\bar{R}_{j_2}R_{j_3} \, d\tau \nonumber  \\
= & \, \int_{t}^{\infty} \sum_{\mu \geq \nu -2}\frac{\psi_{\mu}}{8 \pi \tau}\sum_{m \neq 0}\sum_{r(m)} e^{-i\tau m} e^{i\frac{\Lambda_m}{8\pi}\log 4 \tau} R_{j_1}\bar{R}_{j_2}R_{j_3} \, d\tau \nonumber \\
= & \, \sum_{\nu-2 \leq \mu \leq \nu +2}  \int_{t}^{\infty} \frac{\psi_{\mu}}{8 \pi \tau}\sum_{m \neq 0}\sum_{r(m)} e^{-i\tau m} e^{i\frac{\Lambda_m}{8\pi}\log 4 \tau}  R_{j_1}\bar{R}_{j_2}R_{j_3} + \sum_{ \mu \geq \nu + 3}  \int_{I_{\mu}} \frac{\psi_{\mu}}{8\pi\tau}\sum_{m \neq 0}\sum_{r(m)} e^{-i\tau m} e^{i\frac{\Lambda_m}{8\pi}\log 4 \tau} R_{j_1}\bar{R}_{j_2}R_{j_3}. \label{dec:F7}
\end{align}
Let us start by considering the constant part. Note that 
\begin{align*}
\int_{I_{\mu}} \frac{\psi_{\mu}}{ 8 \pi \tau}\sum_{m \neq 0}\sum_{r(m)} e^{-i\tau m} e^{i\frac{\Lambda_m}{8\pi}\log 4 \tau} R_{j_1}\bar{R}_{j_2}R_{j_3} = & \, \frac{1}{\mu+1} \sum_{m \neq 0}\sum_{r(m)} \left( \int_{I_{\mu}} e^{-itm} \tilde{\psi}_{\mu} R_{j_1} \, d\tau  \bar{R}_{j_2} R_{j_3} \right) \\
= & \,  \frac{1}{(\mu+1)^{4}} \sum_{m \neq 0}\sum_{r(m)}  (\mu+1)^{3}[\tilde{\psi}_{\mu} R_{j_1} \bar{R}_{j_2} R_{j_3}]^{	\widehat{ }}_m.   
\end{align*}
To estimate the above term we follow exactly the case $ i = 1 $. For the time dependent term w.l.o.g. we consider only the case $ \mu = \nu $. Let compute the $ h $-th Fourier coefficient
\begin{align*}
\int_{I_{\nu}^e} e^{-i \frac{h}{2}t} \psi^e_{\nu}(t)& \int_{t}^{\pi(\nu+2)} \frac{\psi_{\nu}(\tau)}{8\pi \tau} \sum_{m\neq 0} \sum_{r(m)} e^{-i m\tau}e^{i\frac{\Lambda_m}{8\pi}\log 4 \tau} R_{j_1} \bar{R}_{j_2} R_{j_3} \, d\tau \, dt \\ 
= & \, \int_{I_{\nu}^e} e^{-i \frac{h}{2}t} \psi^e_{\nu}(t)\int_{t}^{\pi(\nu+2)} \frac{1}{(\nu+1)^{4}} \sum_{m\neq 0} \sum_{r(m)} \sum_{l} e^{-i (m-l) \tau} \widehat{(\nu+1)^{3}\tilde{\psi_{\nu}} R_{j_1} \bar{R}_{j_2} R_{j_3}}_l \, d\tau \, dt  \\
= & \,  \frac{1}{(\nu+1)^{4}} \sum_{m\neq 0} \sum_{r(m)} \sum_{l} \left( \int_{I_{\nu}^e} e^{-i \frac{h}{2}t} \psi^e_{\nu}(t)\int_{t}^{\pi(\nu+2)}  e^{-i (m-l) \tau} \, d\tau \, dt  \right) \widehat{(\nu+1) \tilde{\psi_{\nu}}  R_{j_1} \bar{R}_{j_2} R_{j_3}}_l.
\end{align*}   
From this point we can proceed as in the  case $ i = 1 $.

For $ \nu \in \{N, N+1 \}$, it is enough to extend $ G(R) $ by $ (1-\eta_{\nu+1}) G(R) $ and follow the estimates for $ \nu > N+1$.

\end{proof}

\subsubsection{Proof of Lemma \ref{Beta:est}}

\begin{proof}[Proof of Lemma \ref{Beta:est}]

As before we divide the proof in three parts. In particular we present all the details for the terms $ \Tau_0 $ and $ \Tau_1 $. Then we explain how to deduce the estimates of $ \Tau_2 $ following the ideas used for $ \Tau_1 $.

\item 
\paragraph*{Estimates for $  \Tau_0 $} We start by recalling some useful tools that has been already introduced in the poof of Lemma \ref{Lp:est}.
Let $ \left\{\psi_{\mu} \right\} $ a partition of unity defined as $ \psi_{\mu}(t) = \eta_{\mu}(t) - \eta_{\mu+1}(t) $. It holds  $ 0 \leq \psi_{\eta} \leq 1 $, $ \psi_{\mu}$ is smooth and supported in $ I_{\mu} = [\pi \mu, \pi(\mu +2)] $. Moreover take
$$ \psi^e_{\nu}  = \sum_{d = -1}^1 \psi_{\nu+d} = \eta_{\nu-1} - \eta_{\nu+1} = \eta_{\nu-1}(1-\eta_{\nu+1}) , $$ 
which is a set of smooth cut-off functions supported in the interior of $ I^e_{
	\nu} $, identically $ 1 $ in $ I_{\nu} $,  with uniformly bounded derivative and $ 0 \leq \psi_{\nu}^e \leq 1 $.  

For $ \nu > N+1 $, we estimate the $ l^p(\dot{\mathcal{H}}^{s}_{p}(I_{\nu})) $  seminorm of $ \Tau_0(R) $ by the seminorm of the extension $ \tilde{\Tau}_0(R) = \psi^{e}_{\nu}\Tau_0(R)$. We have
\begin{align*}
\|\Tau_0(R) \|_{l^p(\mathbb{Z}; \dot{\mathcal{H}}^{s}_{p}(I_{\nu}))}^p \leq  \sum_{k} \sum_{h} |h|^{sp } \left|[\psi^{e}_{\nu}(\Tau_0(R))_k]^{\widehat{ }}_h\right|^p = \sum_{k} \sum_{h \neq 0} \frac{1}{|h|^{(1-s) p}}\left|[\partial_{t} (\psi^{e}_{\nu}(\Tau_0(R))_k)]^{\widehat{ }}_h\right|^p.
\end{align*}
Let us compute the Fourier coefficients. First of all note that
\begin{equation*}
\partial_{t} (\psi^{e}_{\nu}(\Tau_0(R))_k) =  (\psi^{e}_{\nu})'(\Tau_0(R))_k + \psi^{e}_{\nu} (\Tau_0(R))'_k.
\end{equation*}
For the term $  (\psi^{e}_{\nu})'(\Tau_0(R))_k $, we use \eqref{dec:F0} and the proof of the estimate follows as for the analogous case in Lemma \ref{Lp:est}. Let now rewrite
\begin{align*}
\psi^{e}_{\nu} (\Tau_0(R))'_k = & \, - \frac{\psi^{e}_{\nu}(t)}{8 \pi t} \sum_{m \neq 0} \sum_{r(m)} e^{-itm} e^{i\frac{\Lambda_m}{8\pi}\log 4 t}
\alpha_{j_1}\bar{\alpha}_{j_2}\alpha_{j_3} \\
= & \, -\frac{1}{\nu+1}\sum_{m\neq 0}\sum_{r(m)} \sum_{l} e^{-i t (m-l/2)} [\tilde{\psi}^e_{\nu}]^{\widehat{ }}_{l} \alpha_{j_1}\bar{\alpha}_{j_2}\alpha_{j_3} \\
= & \, -\frac{1}{\nu+1} \sum_{h} e^{-ith/2}\left(\sum_{\substack{l \in \mathbb{Z} \\ l+h \text{ even} \\ l+h \neq 0 } }\sum_{r((h+l)/2)} [\tilde{\psi}^e_{\nu}]^{\widehat{ }}_{l} \alpha_{j_1}\bar{\alpha}_{j_2}\alpha_{j_3} \right). 
\end{align*}
Using the above equality and some H\"older inequalities, we deduce
\begin{align*}
\sum_{k} \sum_{h \neq 0} \frac{1}{|h|^{(1-s) p}}\left|[\psi^{e}_{\nu}(\Tau_0(R))_k']^{\widehat{ }}_h\right|^p =  & \, \frac{1}{(\nu+1)^p} \sum_{k} \sum_{h \neq 0} \frac{1}{|h|^{(1-s) p}} \left(\sum_{\substack{l \in \mathbb{Z} \\ l+h \text{ even} \\ l+h \neq 0 } }\sum_{r((h+l)/2)} \frac{ \langle l \rangle }{\langle l \rangle }[\tilde{\psi}^e_{\nu}]^{\widehat{ }}_{l} \alpha_{j_1}\bar{\alpha}_{j_2}\alpha_{j_3} \right)^p \\
\leq \, &  \frac{1}{(\nu+1)^p} \sum_{k} \sum_{h \neq 0} \frac{1}{|h|^{(1-s) p}} \left(\sum_{\substack{l \in \mathbb{Z} \\ l+h \text{ even} \\ l+h \neq 0 } }\sum_{r((h+l)/2)} \frac{ 1 }{\langle l \rangle^q }\right)^\frac{p}{q}    
\sum_{\substack{l \in \mathbb{Z} \\ l+h \text{ even} \\ l+h \neq 0 } }\sum_{r((h+l)/2)} \langle l \rangle^p \left|[\tilde{\psi}^e_{\nu}]^{\widehat{ }}_{l}\right|^p |\alpha_{j_1}\bar{\alpha}_{j_2}\alpha_{j_3}|^p \\
\lesssim &  \, \frac{1}{(\nu+1)^p} \sum_{k} \sum_{h \neq 0} \frac{h^{\eps}}{|h|^{(1-s) p}} 
\sum_{\substack{l \in \mathbb{Z} \\ l+h \text{ even} \\ l+h \neq 0 } }\sum_{r((h+l)/2)} \langle l \rangle^p \left|[\tilde{\psi}^e_{\nu}]^{\widehat{ }}_{l}\right|^p |\alpha_{j_1}\bar{\alpha}_{j_2}\alpha_{j_3}|^p \\
\lesssim & \,  \frac{1}{(\nu+1)^p} \|\alpha\|_{l^p}^3 \sup_{h\neq 0} \frac{1}{|h|^{(1-s)p - \eps}} \\
\lesssim & \,  \frac{1}{(\nu+1)^p} \|\alpha \|_{l^p}^3.
\end{align*} 
Note that the last inequality holds true only if we consider $ s < 1 $.

For $ \nu \in \{ N, N+1 \} $, as in Lemma \ref{Lp:est} we extend $ \Tau_0 $ by $ (1-\eta_{\nu+1})\Tau_0 $. For these terms the estimates follow easily as for $ \nu > N + 1 $.

\paragraph*{Estimates $ \Tau_1 $}

As before, we restrict only to the term $ F $ which is the first element of the right hand side of \eqref{MAMO}. For $ \nu > N+1 $, to estimate the $ l^p(\dot{\mathcal{H}}^{s}_{p}) $ seminorm we use the extension $ \tilde{F}(R) = \psi^{e}_{\nu}F(R)$. As before 
\begin{align*}
\|F(R) \|_{l^p(\mathbb{Z}; \dot{\mathcal{H}}^{s}_{p}(I_{\nu}))}^p \leq  \sum_{k} \sum_{h} |h|^{s p}\left|[\psi^{e}_{\nu}(F(R))_k]^{\widehat{ }}_h\right|^p = \sum_{k} \sum_{h \neq 0} \frac{1}{|h|^{(1-s) p}}\left|[\partial_{t} (\psi^{e}_{\nu}(F(R))_k)]^{\widehat{ }}_h\right|^p.
\end{align*}
Let us compute the Fourier coefficients. We use Leibnitz's rule as before and we use  \eqref{dec:F1} instead of \eqref{dec:F0}. Let now rewrite $ \psi^{e}_{\nu} (F(R))'_k $ as a Fourier series.

\begin{align*}
\psi^{e}_{\nu} (F(R))'_k = & \, - \sum_{d=-1}^1 \frac{\psi_{\nu+d}(t)}{8 \pi t} \sum_{m \neq 0} \sum_{r(m)} e^{-itm} e^{i\frac{\Lambda_m}{8\pi}\log 4 t} R_{j_1}\bar{\alpha}_{j_2}\alpha_{j_3} \\
= & \, -\sum_{d = -1}^1\frac{1}{(\nu+1+d)^{2}}\sum_{m\neq 0}\sum_{r(m)} \sum_{l} e^{-i t (m-l/2)} (\nu+1+d) [\tilde{\psi}_{\nu+d} R_{j_1}]^{\widehat{ }}_{l} \bar{\alpha}_{j_2}\alpha_{j_3}\\
= & \, -\sum_{d = -1}^1\frac{1}{(\nu+1+d)^{2}}\sum_{h} e^{-ith/2}\left(\sum_{\substack{l \in \mathbb{Z} \\ l+h \text{ even} \\ l+h \neq 0 } }\sum_{r((h+l)/2)} (\nu+1+d) [\tilde{\psi}_{\nu+d} R_{j_1}]^{\widehat{ }}_{l} \bar{\alpha}_{j_2}\alpha_{j_3} \right) \\
= & \, -\sum_{h} e^{-ith/2} \sum_{d = -1}^1\frac{1}{(\nu+1+d)^{2}} \left(\sum_{\substack{l \in \mathbb{Z} \\ l+h \text{ even} \\ l+h \neq 0 } }\sum_{r((h+l)/2)} (\nu+1+d) [\tilde{\psi}_{\nu+d} R_{j_1}]^{\widehat{ }}_{l} \bar{\alpha}_{j_2}\alpha_{j_3} \right). 
\end{align*}
For the $ l^p(\dot{\mathcal{H}}^{1-s}_{p}) $ seminorm of $ \psi^{e}_{\nu} (F(R))'_k $, we get 
\begin{align*}
\sum_{k} \sum_{h \neq 0} \frac{1}{|h|^{(1-s) p}}\left|[\psi^{e}_{\nu}(F(R))'_k]^{\widehat{ }}_h\right|^p  = & \, \sum_{k} \sum_{h \neq 0} \frac{1}{|h|^{(1-s) p}}\left|   \sum_{d = -1}^1\frac{1}{(\nu+1+d)^{2}} \left(\sum_{\substack{l \in \mathbb{Z} \\ l+h \text{ even} \\ l+h \neq 0 } }\sum_{r((h+l)/2)} (\nu+1+d) [\tilde{\psi}_{\nu+d} R_{j_1}]^{\widehat{ }}_{l} \bar{\alpha}_{j_2}\alpha_{j_3} \right)        \right|^p.
\end{align*}
Note that the sum in $ d $ is of three elements so it is enough to consider for example the case $  d = 0 $. After applying H\"older's inequality, we deduce
\begin{align*}
\frac{1}{(\nu+1)^{2p}} \sum_{k} \sum_{h \neq 0} &  \frac{1}{|h|^{(1-s) p}}\left| \sum_{\substack{l \in \mathbb{Z} \\ l+h \text{ even} \\ l+h \neq 0 } }\sum_{r((h+l)/2)} (\nu+1) [\tilde{\psi}_{\nu} R_{j_1}]^{\widehat{}}_{l} \bar{\alpha}_{j_2}\alpha_{j_3} \right|^p \\
\lesssim & \, \frac{1}{(\nu+1)^{2p}} \sum_{k} \sum_{h \neq 0}  \frac{1}{|h|^{(1-s) p-\eps}} \sum_{\substack{l \in \mathbb{Z} \\ l+h \text{ even} \\ l+h \neq 0 } }\sum_{r((h+l)/2)} \left| (\nu+1) \langle l \rangle^s [\tilde{\psi}_{2\nu} R_{j_1}]^{\widehat{ }}_{l} \bar{\alpha}_{j_2}\alpha_{j_3} \right|^p \\
\lesssim  & \, \frac{\|\alpha\|_{l^p}^{2p}}{(\nu+1)^{2p}}\|(\nu+1) R\|^p_{l^p(\mathbb{Z}; \mathcal{H}^s_{p}(I_{\nu}))} \sup_{h \neq 0 } \frac{1}{|h|^{(1-s)^p-\eps}},
\end{align*}
the desired estimates hold for $ s \in (0,1) $.

For $ \nu \in \{ N, N+1 \} $, as in Lemma \ref{Lp:est} we extend $ F $ by $ (1-\eta_{\nu+1})\Tau_0 $ and the estimates follow easily as for $ \nu > N + 1 $.

\paragraph*{Estimates of $ \Tau_2 $ }

As before let us restrict to the estimates of the cubic term $ G $.
For $ \nu > N+1 $, to estimate the $ l^p(\dot{\mathcal{H}}^{s}_{p}) $ seminorm we use the extension $ \tilde{G}(R) = \psi^{e}_{\nu}G(R)$. So
\begin{align*}
\|G(R) \|_{l^p(\mathbb{Z}; \dot{\mathcal{H}}^{s}_{p}(I_{\nu}))}^p \leq  \sum_{k} \sum_{h} |h|^{s p}\left|[\psi^{e}_{\nu}(G(R))_k]^{\widehat{ }}_h\right|^p = \sum_{k} \sum_{h \neq 0} \frac{1}{|h|^{(1-s) p}}\left|[\partial_{t} (\psi^{e}_{\nu}(G(R))_k)]^{\widehat{ }}_h\right|^p.
\end{align*}
Let us compute the Fourier coefficients of $\partial_{t} (\psi^{e}_{\nu}(G(R))_k)$. First of all note that
\begin{equation*}
\partial_{t} (\psi^{e}_{\nu}(G(R))_k) =  (\psi^{e}_{\nu})'(G(R))_k + \psi^{e}_{\nu} (G(R))'_k.
\end{equation*}
For the term $  (\psi^{e}_{\nu})'(G(R))_k $, we use \eqref{dec:F7} and the proof of the estimate follows as for the analogous case in Lemma \ref{Lp:est}. Similarly

\begin{align*}
\psi^{e}_{\nu} (G(R))'_k = & \, - \sum_{d=-1}^1 \frac{\psi_{\nu+d}(t)}{8 \pi t} \sum_{m \neq 0} \sum_{r(m)} e^{-itm} e^{i\frac{\Lambda_m}{8\pi}\log 4 t} R_{j_1}\bar{R}_{j_2}R_{j_3} \\
= & \, -\sum_{d = -1}^1\frac{1}{(\nu+1+d)^{4}}\sum_{m\neq 0}\sum_{r(m)} \sum_{l} e^{-i t (m-l/2)} (\nu+1+d)^{3} [\tilde{\psi}_{\nu+d} R_{j_1} \bar{R}_{j_2}R_{j_3}]^{\widehat{ }}_l \\
= & \, -\sum_{d = -1}^1\frac{1}{(\nu+1+d)^{4}}\sum_{h} e^{-ith/2}\left(\sum_{\substack{l \in \mathbb{Z} \\ l+h \text{ even} \\ l+h \neq 0 } }\sum_{r((h+l)/2)} (\nu+1+d) [\tilde{\psi}_{\nu+d} R_{j_1} \bar{R}_{j_2}R_{j_3}]^{\widehat{ }}_l\right) \\
= & \, -\sum_{h} e^{-ith/2} \sum_{d = -1}^1\frac{1}{(\nu+1+d)^{4}} \left(\sum_{\substack{l \in \mathbb{Z} \\ l+h \text{ even} \\ l+h \neq 0 } }\sum_{r((h+l)/2)} (\nu+1+d) [\tilde{\psi}_{\nu+d} R_{j_1} \bar{R}_{j_2}R_{j_3}]^{\widehat{ }}_l \right). 
\end{align*}
The estimates then follow straight-forward as in the case $ i = 1$.

Finally for the case $ \nu \in \{N, N+1 \}$. We extend $ G(R) $ by $ (1-\eta_{\nu+1}) G(R) $ and we argue as in the case $ \nu > N+1 $.

\end{proof}

\section{Proof of Proposition \ref{prop:exa}}
\label{sec:ex}

This section is dedicated to the proof of Proposition \eqref{prop:exa}. Let start by recalling equation \eqref{B:equ} which reads

\begin{equation*}
i\partial_{t} B_{k} = -\frac{1}{8\pi t} \sum_{m \neq 0}\sum_{r(m)} e^{-imt}e^{i\frac{\Lambda_m}{8\pi}\log 4t}B_{j_1}\bar{B}_{j_2}B_{j_3} + \frac{1}{8 \pi t}\left(|B_{k}|^2 - |\alpha_k|^2\right)B_{k} \quad  \text{ and } \quad \lim_{t \to +\infty} B_k = \alpha_k.
\end{equation*} 
We are looking for explicit solutions in the case the initial datum $ \alpha_k = \alpha $ for any $ k \in \mathbb{Z} $. First of all notice that $ \Lambda_m = 0 $ and let us make the ansatz that $ B_k(t) = B(t) $ for any $ k \in \mathbb{Z} $. The equations rewrite
\begin{equation*}
i\partial_{t} B = - |B|^2B \sum_{m \neq 0} \frac{e^{-imt}}{8\pi t}r_{m}  + \frac{1}{8 \pi t}\left(|B|^2 - |\alpha|^2\right)B \quad  \text{ and } \quad \lim_{t \to +\infty} B = \alpha.
\end{equation*} 
where we denote $ \sum_{r(m) } 1 = r_m $. Notice that by definition of $ r(m) $ we have $ r_m = r_{-m} $. We deduce that 
\begin{equation*}
\Im \left( \sum_{m\neq 0} e^{itm}r_m \right) =0,
\end{equation*} 
thus
\begin{equation*}
\partial_t|B(t)|^2=0.
\end{equation*}
The equation rewrites 
\begin{equation*}
i\partial_{t} B = - |\alpha|^2 B \sum_{m \neq 0} \frac{e^{-imt}}{8\pi t}r_{m}   \quad  \text{ and } \quad \lim_{t \to +\infty} B = \alpha.
\end{equation*} 
If we are able to show that $ \sum_{m\neq 0 } \frac{e^{-im \tau}}{ \tau}r_m $ is integrable in $ (T,+\infty ) $ for some $ T > 0 $, then the solution is 
\begin{equation*}
B(t) =  \alpha e^{-i|\alpha|^2\int_t^{+\infty} \sum_{m \neq 0}\sum_{r(m)} \frac{e^{-im \tau}}{8\pi \tau} \, d\tau} .
\end{equation*}
We have
\begin{equation*}
-\int_{t}^{+\infty} \sum_{m \neq 0} \frac{e^{-im \tau}}{ \tau} r_m  \, d\tau =   \sum_{m \neq 0} \frac{e^{-im t}}{ i m t} r_m  +  \int_{t}^{\infty} \sum_{m \neq 0} \frac{e^{-im \tau}}{ i m \tau^2} r_m  \, d\tau = \frac{1}{it} \sum_{m \neq 0} \frac{e^{-im \tau}}{  m} r_m + \sum_{m \neq 0 } r_m f(t,m),
\end{equation*}
with 
\begin{equation*}
|f(t,m)| \leq \frac{1}{m^2 t^2 }.
\end{equation*}
As 
$$r_m=2Card\, \{j\in\mathbb Z,\,j|m\},$$
we know that
$$r_m\lesssim \log m,$$
and the upper-estimate is optimal for $m=2^n$. Thus there is no issue for defining the part involving $f(x,m)$, and the remaining part
$$\tilde B(t):=e^{\frac{i}{t} \sum_{m\neq 0} e^{-im t} \frac{r_m}{im}},$$ 
satisfies
$$\tilde B\in l^\infty_\nu L^2(\nu,\nu+1)\cap l^p_\nu \dot H^{s}(\nu,\nu+1).$$
for all 
$$1\leq p\leq \infty,\quad 0\leq s<\frac 12,\quad (1+s)p>1.$$






\end{document}